\newtheorem{thm}{Theorem}[section]
\newtheorem{cor}[thm]{Corollary}
\newtheorem{defn}[thm]{Definition}
\newtheorem{example}[thm]{Example}
\newtheorem{lemma}[thm]{Lemma}
\newtheorem{prop}[thm]{Proposition}
\newtheorem{remark}[thm]{Remark}
\newtheorem{conj}[thm]{Conjecture}
\newtheorem{quest}[thm]{Question}
\newtheorem{question}[thm]{Question}
\numberwithin{equation}{section}
\newcommand{\Z}{\mathbb{Z}}
\newcommand{\Q}{\mathbb{Q}}
\newcommand{\R}{\mathbb{R}}
\newcommand{\C}{\mathbb{C}}
\newcommand{\K}{\mathbb{K}}
\renewcommand{\H}{{H}}
\def\part{\partial}
\def\bd{\begin{defn}}
\def\ed{\end{defn}}
\def\bt{\begin{thm}}
\def\et{\end{thm}}
\def\br{\begin{remark}}
\def\er{\end{remark}}
\def\bc{\begin{cor}}
\def\ec{\end{cor}}
\def\bp{\begin{prop}}
\def\ep{\end{prop}}
\def\be{\begin{equation}}
\def\ee{\end{equation}}
\def\bn{\begin{enumerate}}
\def\en{\end{enumerate}}
\def\ba{\begin{array}}
\def\ea{\end{array}}
\def\bex{\begin{example}}
\def\eex{\end{example}}
\newcommand\sO{{\mathcal O}}
\newcommand\sH{{\mathcal H}}
\newcommand\sP{{\mathcal P}}
\newcommand\sI{{\mathcal I}}
\newcommand\sF{{\mathcal F}}
\newcommand\sM{{\mathcal M}}
\newcommand\sL{\mathcal{L}}
\def\bZ{\mathbb{Z}}
\def\bZ{\mathbb{Z}}
\def\Lotimes{\otimes^L}
\def\dR{\mathds{A}}
\title[Aspherical manifolds]{Aspherical manifolds, Mellin transformation \\ and a question of  Bobadilla-Koll\'{a}r}
\begin{document}

\author[Y. Liu ]{Yongqiang Liu}
\address{Y. Liu : The Institute of Geometry and Physics, University of Science and Technology of China, 96 Jinzhai Road, Hefei 230026 P.R. China}
\email{liuyq@ustc.edu.cn}

\author[L. Maxim ]{Lauren\c{t}iu Maxim}
\address{L. Maxim : Department of Mathematics, University of Wisconsin-Madison, 480 Lincoln Drive, Madison WI 53706-1388, USA}
\email {maxim@math.wisc.edu}

\author[B. Wang]{Botong Wang}
\address{B. Wang : Department of Mathematics, University of Wisconsin-Madison, 480 Lincoln Drive, Madison WI 53706-1388, USA}
\email {wang@math.wisc.edu}

\keywords{Aspherical manifolds, homology fiber bundle, Mellin transformation, Bobadilla-Koll\'{a}r question, Singer-Hopf conjecture, Shafarevich conjecture}

\subjclass[2010]{14F05, 14F35, 14F45, 32S60, 32L05, 58K15}

\date{\today}

\begin{abstract} 
In their 2012 paper, Bobadilla and Koll\'ar studied topological conditions which guarantee that a proper map of complex algebraic varieties is a topological or differentiable fibration. They also asked whether a certain finiteness property on the relative covering space can imply that a proper map is a fibration. In this paper, we answer positively the integral homology version of their question in the case of abelian varieties, and the rational homology version in the case of compact ball quotients. We also propose several conjectures in relation to the Singer-Hopf conjecture in the complex projective setting. 
\end{abstract}

\maketitle

\section{Introduction}
A CW-complex $X$ is called {\it aspherical} if it is connected and all its higher homotopy groups vanish, i.e., $\pi_i(X)$ is trivial for all $i \geq 2$. The vanishing of higher-homotopy groups is equivalent to the fact that the universal covering $\widetilde{X}$ of $X$ is contractible. The homotopy type of an aspherical CW complex depends only on its fundamental group (e.g., see \cite{Lu10} for a survey).

Interesting examples of aspherical spaces are the closed Riemannian manifolds with non-positive sectional curvature, compact ball quotients or abelian varieties.

There are several prominent open conjectures concerning aspherical manifolds. For instance, a conjecture of Borel asserts that two aspherical closed manifolds are homeomorphic if and only if their fundamental groups are isomorphic. The Borel conjecture is proved in many important cases, e.g., it is true in dimensions $\neq 3, 4$ for all non-positively curved closed Riemannian manifolds, see \cite{FJ}.  Another important conjecture  was made by Hopf (and later on strengthened by Singer) on the sign of the topological Euler characteristic of an aspherical closed manifold.
\begin{conj}{\rm (Singer-Hopf)}\label{SHi} If $X^{2n}$ is a closed, aspherical manifold of real dimension $2n$, then $$(-1)^n  \chi(X^{2n}) \geq 0.$$ \end{conj}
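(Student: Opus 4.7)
Since the Singer--Hopf conjecture is a celebrated open problem, any proof plan is necessarily a sketch of existing strategies rather than a finished argument. The natural starting point is Atiyah's $L^{2}$-index theorem applied to the universal cover $\widetilde{X}\to X$, which expresses
\[
\chi(X) \;=\; \sum_{k=0}^{2n} (-1)^{k}\, b_{k}^{(2)}(\widetilde{X}),
\]
where $b_{k}^{(2)}$ is the $L^{2}$-Betti number with respect to the deck group $\Gamma=\pi_{1}(X)$. Because $X$ is aspherical, $\widetilde{X}$ is contractible, so ordinary Betti numbers vanish outside degree $0$; the Singer conjecture strengthens this to the prediction that $b_{k}^{(2)}(\widetilde{X})=0$ for all $k\neq n$. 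Granting Singer vanishing one immediately obtains $(-1)^{n}\chi(X)=b_{n}^{(2)}(\widetilde{X})\geq 0$ for free, so the entire problem is reduced to that vanishing statement.

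The plan is therefore to prove the Singer vanishing in as wide a class as possible, specializing first. For compact ball quotients the Hirzebruch proportionality principle, comparing Chern numbers of $X$ with those of its compact dual, yields $(-1)^{n}\chi(X)>0$ directly; for abelian varieties $\chi(X)=0$ is trivial; for K\"ahler hyperbolic manifolds Gromov's theorem on harmonic $L^{2}$-forms supplies the vanishing outside middle degree. Each of these covers one of the families motivating the paper, but none extends to a general aspherical closed manifold.

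For a general complex projective aspherical manifold the approach I would try is to upgrade the K\"ahler arguments via the Albanese morphism $\alb\colon X\to \Alb(X)$ together with the Mellin transformation on $\Alb(X)$. The hope is that, after twisting by a generic unitary character $\chi\in\mathrm{Char}(\Gamma)$, the constructible complex $R\alb_{*}\Q_{X}\otimes \chi$ should be concentrated in middle perverse degree; an estimate of Kashiwara type on its characteristic variety would then propagate the sign of its Euler characteristic back to $X$. Asphericity would enter by constraining the cohomology jump loci of $\Q_{X}$ inside $\mathrm{Char}(\Gamma)$, which is exactly the technology the paper develops en route to the Bobadilla--Koll\'ar question.

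The main obstacle is the Singer $L^{2}$-vanishing itself: outside the K\"ahler hyperbolic and locally symmetric settings no general mechanism is known for forcing $L^{2}$-harmonic forms into the middle degree, and a proof in full generality would likely require a previously unidentified interaction between asphericity and either curvature or the representation theory of $\Gamma$. Consequently I expect the method above to recover only the structured cases treated in the body of the paper rather than the full conjecture, which explains why the authors state Conjecture~\ref{SHi} as motivation rather than as a theorem.
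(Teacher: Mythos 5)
You correctly observe that Conjecture~\ref{SHi} is an open problem and that the paper states it as motivation rather than proving it, so there is no proof in the paper against which a complete argument could be checked. Your sketch of the classical $L^{2}$-approach (Atiyah's $L^{2}$-index theorem plus the Singer vanishing prediction $b_{k}^{(2)}(\widetilde{X})=0$ for $k\neq n$) is accurate, and the special cases you list (ball quotients via proportionality, tori trivially, K\"ahler hyperbolic manifolds via Gromov, non-positively curved K\"ahler via Jost--Zuo) are the correct known instances.

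Where you diverge from the paper is in the proposed route for the projective case. The paper does not attack Singer--Hopf through $L^{2}$-cohomology or through the Mellin transformation (the Mellin transform is used only for the Bobadilla--Koll\'ar question). Instead, in Section~\ref{conj} and Section~\ref{nef} it isolates a purely algebro-geometric conditional chain: the Shafarevich conjecture together with asphericity forces the universal cover to be Stein (Corollary~\ref{cor_twoconj}); Conjecture~\ref{conj1} would upgrade ``Stein universal cover'' to ``nef cotangent bundle''; and once the cotangent bundle is nef, the sign of $\chi(X,\mathcal{P})$ for any perverse sheaf $\mathcal{P}$ --- in particular $\mathcal{P}=\Q_X[\dim X]$ --- follows from Kashiwara's index formula combined with the Demailly--Peternell--Schneider semipositivity of intersection numbers on nef bundles (Proposition~\ref{prop_nef}). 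So the paper's strategy replaces harmonic-form analysis with effectivity of characteristic cycles and positivity of conic intersection numbers, and it yields the stronger Conjecture~\ref{co2} about all perverse sheaves, not just the constant one. Your heuristic about twisting $R\alb_{*}\Q_X$ by generic characters and hoping for middle-perversity concentration is not what the paper does and would not by itself control the sign of the Euler characteristic; the sign comes from the geometry of the cotangent bundle, not from generic vanishing on the Albanese. It would strengthen your write-up to replace that speculative paragraph with the paper's actual conditional chain, since that is the genuinely new content of Section~\ref{conj}.
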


More recently, inspired by work of Koll\'ar and Pardon \cite{KP}, Bobadilla and Koll\'ar \cite{BK} used aspherical manifolds in their search for homotopy/homology fiber bundles, which in turn are conjectured to be differentiable fibre bundles.
In order to state their conjecture, we make the following definition.

\bd[{\cite[Definition 1]{BK}}]\rm
Let $X$ and $Y$ be complex manifolds.\footnote{In \cite{BK}, the authors considered more generally complex spaces. In this paper, we will restrict ourselves to smooth manifolds/varieties.} A proper holomorphic map $f : X \to Y$ is said to be a {\it homotopy fiber bundle} if $Y$ has an open cover $Y = \bigcup_j U_j$ such that for every $j$ and for every $y \in U_j$ the inclusion
\begin{center}
$f^{-1}(y) \hookrightarrow f^{-1}(U_j) $ is a homotopy equivalence.
\end{center}
Similarly, given a commutative ring $\dR$, the map $f : X \to Y$ is called an {\it $\dR$-homology fiber bundle} if
\begin{center}
$\H_*(f^{-1}(y),\dR) \to \H_*( f^{-1}(U_j),\dR) $ 
is an isomorphism. 
\end{center}
\ed 

Let $X$ and $Y$ be  smooth projective varieties and $f : X \to Y$ a surjective morphism.
Let $\widetilde{Y} \to Y$ denote the universal cover. By pull-back we obtain a map $\widetilde{f}: \widetilde{X} \to \widetilde{Y}$.
In \cite{BK}, Bobadilla and Koll\'{a}r asked the following question, which also appears as Question 26 in \cite{KP} (see also \cite[Question 4]{KP} for a broader statement).

\begin{quest}\label{qu}
Assume that $\widetilde{Y}$ is contractible and $\widetilde{X}$ is homotopy equivalent to a finite CW
complex. Does this imply that $f$ is a topological or differentiable fiber bundle?
\end{quest}

The above question can be divided into two parts. The first part is more topological:
\begin{question} {\rm (\cite[Question 4.2]{BK})} \label{q4}
Assume that $\widetilde{Y}$ is contractible and $\widetilde{X}$ is homotopy equivalent to a finite CW
complex. Does this imply that $f$ is a homotopy or $\Z$-homology fiber bundle?
\end{question}
We will refer to the homological part of Question \ref{q4} as the \textit{integral Bobadilla-Koll\'ar question}. If we replace ``$\Z$-homology fiber bundle'' by ``$\Q$-homology fiber bundle'' in the above question, we call it the \textit{rational Bobadilla-Koll\'ar question}. 

The second part is more geometric, and is formulated as a conjecture in \cite{BK}. 
\begin{conj}{\rm (\cite[Conjecture 3]{BK})}
Let $f: X\to Y$ be a proper map of smooth complex algebraic varieties. If $f$ is a homotopy or $\Z$-homology fiber bundle, then it is a differentiable fiber bundle. 
\end{conj}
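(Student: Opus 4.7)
The plan is to reduce the conjecture to the statement that $f$ has no critical points, after which Ehresmann's theorem applied to the proper submersion $f\colon X\to Y$ between smooth manifolds will immediately yield a differentiable fibre bundle structure (in fact, since $f$ is holomorphic, even a holomorphic fibre bundle). Thus the real content of the conjecture is the implication: \emph{if $f$ is a homotopy or $\Z$-homology fibre bundle, then every fibre of $f$ is smooth of the expected dimension.}

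First I would equip $f$ with a Whitney stratification of $Y$, compatible with a Whitney stratification of $X$, such that $Rf_*\Z_X$ is constructible with respect to it and, by Thom's first isotopy lemma, $f$ is a topological fibre bundle over each stratum. The hypothesis that $f$ is a $\Z$-homology fibre bundle then translates into the condition that each sheaf $R^if_*\Z_X$ is a local system on all of $Y$: that is, the fibrewise cohomology does not jump as $y$ crosses the discriminant $\Delta\subset Y$ over which the fibres of $f$ could a priori become singular. In particular $\Delta$ cannot carry any cohomological ``surprise'', and the only possible obstruction to $f$ being a submersion is a geometric, non-cohomological degeneration.

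Next I would argue by contradiction, assuming there is a critical point $x_0\in X$ with $y_0=f(x_0)\in\Delta$. Passing to a small holomorphic transversal disc to $\Delta$ at $y_0$ and invoking the Milnor fibration, the local-system condition on $Rf_*\Z_X$ forces the vanishing cycle complex $\phi_g \Z_X$ to be acyclic, where $g$ is a local defining equation of $\Delta$. In the case of an isolated critical point in the fibre $f^{-1}(y_0)$, this immediately implies that the Milnor number at $x_0$ vanishes, so by Milnor's classical theorem $x_0$ is a smooth point of the fibre, contradicting the choice of $x_0$.

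The main obstacle --- and the reason this conjecture remains open --- is to extend this local argument to arbitrary critical loci, in particular to non-isolated singularities, where the passage from the vanishing of $\phi_g\Z_X$ to smoothness of the fibre is substantially more delicate and seems to require new stratified Morse-theoretic input. In the homotopy fibre bundle version one has in addition that all monodromy actions are trivial up to homotopy, so one might hope to replace the vanishing-cycle step by an $h$- or $s$-cobordism argument between adjacent fibres, at the cost of the usual low-dimensional exceptions; but carrying this out in full generality, uniformly in the homotopy and $\Z$-homology settings, appears to demand techniques beyond those currently available.
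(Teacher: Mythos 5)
This statement is a \emph{conjecture}, not a theorem: the paper cites it as \cite[Conjecture 3]{BK} and does not offer a proof, nor does any proof appear in the literature. Your proposal in fact does not claim to prove it either --- you correctly identify the standard reduction (Ehresmann plus Whitney stratification plus vanishing cycles), carry it through for isolated singularities in the fibre, and then explicitly acknowledge that extending the argument to non-isolated critical loci is the open problem. That self-assessment is accurate. Two further points worth flagging: first, the vanishing-cycle step as you state it is not quite enough even for the rational or integral homology version, since $\phi_g$ acyclic on the transversal slice controls only the cohomology of nearby fibres, and deducing smoothness of the central fibre from this requires care with equisingularity conditions and monodromy (the Lê--Ramanujam / $\mu$-constant circle of ideas, which already has low-dimensional exceptions); second, in the homotopy fibre bundle setting the known cases in \cite{BK} and \cite{KP} do invoke $h$-cobordism-type arguments much as you suggest, so your instinct about where the difficulty lies matches the experts' assessment. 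In short, you have correctly identified the shape of the problem and why it is hard, but there is no proof here to compare against, because none exists.
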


In this paper we answer positively the homological versions of the Bobadilla-Koll\'ar question \ref{q4} in the case of aspherical projective manifolds with ample cotangent bundles (e.g., compact ball quotients) and abelian varieties. More precisely, we show the following 
(see Theorem \ref{th23} and \ref{th38}).
\bt\label{ti}
The rational Bobadilla-Koll\'ar question has a positive answer  if $Y$ is an aspherical projective manifold with ample cotangent bundle. Moreover, the integral Bobadilla-Koll\'ar question has a positive answer  if $Y$ is an  abelian variety. 
\et

As a concrete application, we get the following.
\begin{cor}
Let $X$ be a projective manifold, and denote by $X^{ab}$ the universal free abelian cover of $X$, i.e., the covering associated to the homomorphism $\pi_1(X)\to H_1(X, \Z)/\mathrm{torsion}$. If $X^{ab}$ is homotopy equivalent to a finite CW-complex, then the Albanese map of $X$ is a $\Z$-homology fiber bundle. 
\end{cor}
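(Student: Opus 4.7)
The plan is to deduce the corollary as a direct application of the second assertion of Theorem \ref{ti} to the Albanese map $\alpha : X \to A := \mathrm{Alb}(X)$. Since $X$ is a projective manifold, $A$ is an abelian variety and $\alpha$ is a morphism of smooth projective varieties, hence in particular a proper holomorphic map. The first and only real piece of work is to verify that the pullback cover of $X$ along the Albanese map agrees with the universal free abelian cover $X^{ab}$ appearing in the hypothesis.

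For this, recall that $A$ is a complex torus, so its universal cover $\widetilde{A}$ is a complex vector space, in particular contractible, and $\pi_1(A) = H_1(A,\Z)$ is free abelian. By the universal property of the Albanese morphism, the induced homomorphism $\alpha_* : \pi_1(X) \to \pi_1(A)$ factors as the natural quotient
\[
\pi_1(X) \twoheadrightarrow H_1(X,\Z) \twoheadrightarrow H_1(X,\Z)/\mathrm{torsion} \xrightarrow{\;\cong\;} \pi_1(A),
\]
so $\ker(\alpha_*)$ coincides with the kernel of $\pi_1(X) \to H_1(X,\Z)/\mathrm{torsion}$. Consequently, the fiber product $X \times_A \widetilde{A}$ is precisely the cover $X^{ab}$. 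The hypothesis of the corollary then says exactly that $\widetilde{X} := X \times_A \widetilde{A}$ is homotopy equivalent to a finite CW complex.

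With this identification in hand, all the hypotheses needed to invoke the abelian-variety half of Theorem \ref{ti} are verified: $Y = A$ is an abelian variety, $\widetilde{A}$ is contractible, and $\widetilde{X} = X^{ab}$ is homotopy finite. The theorem therefore yields that $\alpha : X \to A$ is a $\Z$-homology fiber bundle, which is the desired conclusion.

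The one point that requires care is that Theorem \ref{ti} is stated under the blanket assumption that $f$ is surjective, whereas the Albanese map need not be so in general. I expect this to cause no actual trouble: either one notes that over points outside $\alpha(X)$ the fibers are empty and the local-constancy condition is vacuous, or one applies the theorem verbatim to the Stein factorization/image of $\alpha$ viewed inside the abelian subvariety of $A$ generated by it. I would treat this as the main (albeit minor) obstacle and dispose of it by a direct reduction to the surjective case.
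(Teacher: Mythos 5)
Your proof is essentially correct and takes the same route as the paper: the Corollary is an immediate application of Theorem~\ref{th38} to the Albanese map. Your careful identification of $X\times_A\widetilde A$ with the universal free abelian cover $X^{ab}$, via the fact that the Albanese morphism induces an isomorphism $H_1(X,\Z)/\mathrm{torsion}\xrightarrow{\sim}H_1(\Alb(X),\Z)$, is correct and is a useful piece of bookkeeping that the paper leaves implicit.

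The one place where you are slightly off is the final paragraph about surjectivity, and both of your proposed fixes are flawed. The precise result being applied is Theorem~\ref{th38}, which is stated for an arbitrary morphism $f:X\to Y$ to an abelian variety with no surjectivity hypothesis --- the surjectivity assumption appears only in the informal Theorem~\ref{ti} because it quotes the setup of the Bobadilla--Koll\'ar question. So no reduction to the surjective case is needed. Moreover, your first suggested workaround is wrong: for a proper map with nonempty source, being a $\Z$-homology fiber bundle over a connected base actually \emph{forces} surjectivity (if $y$ is outside the image but $U_j$ meets the image, then $H_0(f^{-1}(y))=0$ cannot match $H_0(f^{-1}(U_j))\neq0$), so the condition is far from vacuous over points outside $\alpha(X)$. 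Your second workaround is circular: the abelian subvariety generated by the image of the Albanese map is $\Alb(X)$ itself, by the universal property. What actually happens is that the hypothesis that $X^{ab}$ be homotopy finite (together with $X\neq\emptyset$) already forces the Albanese map to be surjective: the proof of Theorem~\ref{th38} shows $R\alpha_*\Z_X$ is locally constant, and a nonzero locally constant complex cannot be supported on a proper closed subvariety. So the surjectivity you were worried about comes for free, and no separate reduction is required.
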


Our main results should be compared to \cite[Theorem 16,  Theorem 20]{KP} by Koll\'ar and Pardon, where special cases of Question \ref{qu} are addressed. More precisely, they gave positive answers to the question when $f$ is generically finite, or when $\widetilde{X}$ is an open semialgebraic subset of a projective variety.

Our approach to proving Theorem \ref{ti} relies on the theory of perverse sheaves and derived calculus (for an introduction to these techniques, see, e.g., \cite{Di2, Ma}).
Let us mention here the main ideas of the proof.
First note that a proper map $f:X \to Y$ of smooth complex algebraic varieties is an $\dR$-homology fiber bundle if and only if the sheaves $R^if_*\dR_X$ are locally constant on $Y$ for all $i \geq 0$. If this is the case, we say that the constructible complex $Rf_*\dR_X$ is locally constant. Such complexes are characterized in Section \ref{locc}. 

To answer the homological version of Question \ref{q4}, in Section \ref{mel} we first introduce and study the properties of a nonabelian version of the Mellin transformation considered in \cite{LMWc,LMWd}, see Definition~\ref{def6}. Mellin transformations are the topological counterparts of the more classical Fourier-Mukai transforms in algebraic geometry.

A positive answer to the rational Bobadilla--Koll\'ar question can be given for any aspherical projective manifold with an ample cotangent bundle (e.g., a compact ball quotient) by using the decomposition theorem and positivity results for Chern classes of ample vector bundles (cf. Section \ref{nef}).
For the integral version, we have to also work with fields of positive characteristics. A positive answer to the integral Bobadilla--Koll\'ar question for abelian varieties is given in Section \ref{sec5}, and it relies on a key non-vanishing property of the Mellin transformation (see Proposition \ref{prop_nonzero}), which is proved using  characteristic cycles and the geometry of abelian varieties. 

Finally, in Section \ref{conj} we speculate around the Singer-Hopf conjecture \ref{SHi} in the complex algebraic setting. We propose various generalizations, also in relation to the Shafarevich conjecture. 
Recall that a complex space is holomorphically convex if it admits a proper surjective holomorphic mapping onto some Stein space, which induces an isomorphism between the algebras of holomorphic functions on these spaces. The Shafarevich conjecture then predicts that the universal covering space of a complex projective manifold should be holomorphically convex.
In Section \ref{conj} we prove the following result (see Corollary \ref{cor_twoconj}). 
\bt
Let $Y$ be an aspherical projective manifold. Then the Shafarevich conjecture implies that the universal cover of $Y$ is Stein. 
\et
We also conjecture that the cotangent bundle of a projective manifold with a Stein universal cover is nef (see Conjecture \ref{conj1}).  Together with semi-positivity results for nef vector bundles from \cite{DPS}, this would then imply the Singer-Hopf Conjecture \ref{SHi} in the complex projective setting.

\medskip

As a convention, in this paper all varieties and manifolds are assumed to be connected. 

\medskip

\noindent {\bf Acknowledgements.} This project was started while the second and third authors visited the Basque Center for Applied Mathematics in Bilbao, Spain. We thank this institute for hospitality and for providing us with excellent working conditions. We also thank Donu Arapura and Pierre Py for useful discussions. Finally, we thank the anonymous referee for constructive comments which helped us streamline some of the arguments.
Y. Liu is partially supported by National Key Research and Development Project SQ2020YFA070080, the starting grant KY2340000123 from University of Science and Technology of China, National Natural Science Funds of China (Grant No. 12001511), the Project of Stable Support for Youth Team in Basic Research Field, CAS (YSBR-001),  the project ``Analysis and Geometry on Bundles" of Ministry of Science and Technology of the People's Republic of China and  Fundamental Research Funds for the Central Universities.
L.  Maxim is  partially  supported  by the Simons Foundation  (Collaboration Grant  \#567077),  and by the Romanian Ministry of National Education (CNCS-UEFISCDI grant PN-III-P4-ID-PCE-2020-0029). B. Wang is partially supported by the NSF grant DMS-1701305 and by a Sloan Fellowship.


\section{Nonabelian Mellin transformation}\label{mel}

In this section we introduce the Mellin transformation for complex manifolds and study its immediate properties. 
This is a non-abelian version of the Gabber-Loeser Mellin transformation defined in \cite{GL} for complex affine tori, and further studied in \cite{BSS} for abelian varieties. Here we extend these references to spaces with non-abelian fundamental groups. See \cite{LMWe} for more recent developments.

Let $Y$ be a complex manifold of dimension $d$ with a base point $y_0$. Denote the  fundamental group $\pi_1(Y, y_0)$ by $G$. 
We denote the universal cover of $Y$ by $\widetilde{Y}$. Let $\dR$ be a commutative ring, and let $\sF$ be a bounded $\dR$-constructible complex on $Y$. 
\bd\label{def6}\rm
We define the {\it Mellin transformation} of $\sF$ on $Y$ as
\[
\sM_!(Y, \sF)\coloneqq R\widetilde{q}_! \big(p^*\sF\big)\in D^b(\dR[G]),
\]
where $p: \widetilde{Y}\to Y$ is the universal covering map, $\widetilde{q}: \widetilde{Y}\to \textrm{pt}$ is the projection to a point, and $D^b(\dR[G])$ is the derived category of right $\dR[G]$-modules. When there is no risk of confusion, we will simply write $\sM_!(\sF)$ instead of $\sM_!(Y, \sF)$.
\ed
Note that since $p^*\sF$ is a right $G$-equivariant complex and $\widetilde{q}$ is a right $G$-equivariant map, $R\widetilde{q}_! (p^*\sF)$ admits a natural right $G$-action. 
By definition, the Mellin transformation
\[
\sM_!(Y, -): D^b_c(Y, \dR)\to D^b(\dR[G])
\]
is a functor of triangulated categories. 

Let $\sL_G$ be the rank one $\dR[G]$-local system on $Y$ whose monodromy action is given by multiplication on the right. Note that  $\sL_G\cong Rp_!\dR_{\widetilde{Y}}$ as sheaves of right $\dR[G]$-modules. Then an equivalent description of the Mellin transformation is given by the following
\begin{prop}
\begin{equation}\label{eq_def2}
\sM_!(Y, \sF)= R{q}_!(\sF\otimes_\dR \sL_G)\in D^b(\dR[G]),
\end{equation}
where $q: Y\to \textrm{pt}$ is the projection to a point. 
\end{prop}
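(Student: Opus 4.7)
The proof proposal rests on two observations: the factorization $\widetilde{q} = q \circ p$ of the structure map through the universal cover, and the projection formula for the proper pushforward $Rp_!$.

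First, I would write
\[
R\widetilde{q}_!(p^*\sF) \;\cong\; Rq_! \, Rp_!(p^*\sF)
\]
using that $R(q\circ p)_! = Rq_! \circ Rp_!$. Next, I would invoke the projection formula in the form
\[
Rp_!(p^*\sF) \;\cong\; Rp_!\bigl(p^*\sF \otimes_{\dR} \dR_{\widetilde{Y}}\bigr) \;\cong\; \sF \otimes_{\dR} Rp_!\dR_{\widetilde{Y}} \;=\; \sF \otimes_{\dR} \sL_G,
\]
which is valid for $p$ a covering map (so $Rp_!$ has finite cohomological dimension and the formula holds for arbitrary bounded $\dR$-constructible $\sF$). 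Combining these two displays gives the desired identification of objects in $D^b(\dR)$.

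The main point that requires care, and which I expect to be the only nontrivial step, is the compatibility of the right $G$-actions on the two sides. On the left, $G$ acts on $R\widetilde{q}_!(p^*\sF)$ through the deck transformation action on $\widetilde{Y}$, which is $p$-equivariant for the trivial action downstairs, hence induces an action on $p^*\sF$ that is compatible with $R\widetilde{q}_!$. On the right, the action is the one carried by $\sL_G = Rp_!\dR_{\widetilde{Y}}$, again coming from deck transformations. I would therefore verify the $G$-equivariance of the projection formula isomorphism by unpacking its construction (adjunction between $Rp_!$ and $p^!$, together with the canonical map $\dR_{\widetilde{Y}} \to p^!\dR_Y$ in the case of a local homeomorphism) and checking that each arrow commutes with the deck transformation action. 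Since all maps in the projection formula are natural with respect to automorphisms of $\widetilde{Y}$ over $Y$, this equivariance is automatic, so one only has to make the bookkeeping explicit.

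Once equivariance is established, the composed isomorphism $R\widetilde{q}_!(p^*\sF) \cong Rq_!(\sF \otimes_{\dR} \sL_G)$ lives in $D^b(\dR[G])$, which is the claim. No further ingredients beyond the projection formula and base change for covering maps are needed; the proof is essentially formal once the $G$-equivariance is checked.
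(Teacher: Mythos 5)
Your proof is correct and takes the same route as the paper: factor $\widetilde{q} = q \circ p$, apply the projection formula to $Rp_!$, and identify $Rp_!\dR_{\widetilde{Y}}$ with $\sL_G$. Your additional attention to checking $G$-equivariance of the projection-formula isomorphism is a reasonable point of care that the paper leaves implicit.
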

\begin{proof}
Indeed, we have:
\[
R\widetilde{q}_! \big(p^*\sF\big)
\cong Rq_! Rp_! \big( p^*\sF\otimes_\dR \dR_{\widetilde{Y}} \big) \cong Rq_! \big( \sF \otimes_\dR Rp_!\dR_{\widetilde{Y}}  \big)\cong Rq_! \big( \sF \otimes_\dR \sL_G \big),
\]
where the second isomorphism uses the projection formula.
\end{proof}

\begin{remark}\rm
Here we made a choice that the equivariant $G$-actions are on the right. This essentially depends on how we let $G$ act on $\widetilde{Y}$. We consider points in $\widetilde{Y}$ as homotopy classes of paths from an arbitrary point $y$ on $Y$ to the base point $y_0$. Then the natural action of $G$ on $\widetilde{Y}$ is on the right. 
\end{remark}

\begin{example}\rm\label{ex9}
Suppose that $Y$ is an aspherical complex manifold of complex dimension $d$ and $L$ is an $\dR$-local system on $Y$. Then $\sM_!(L)\cong V[-2d]$, where $V$ is the $\dR[G]$-module associated to the monodromy representation of $L$. 
\end{example}

Let $V$ be a finitely generated free $\dR$-module. Fix a representation $\rho: G\to \mathrm{Aut}_\dR(V)$, which induces a left $\dR[G]$-module structure on $V$. We write $V_\rho$ to emphasize the $\dR[G]$-module structure on $V$. We denote by $L_\rho$ the $\dR$-local system on $Y$ whose stalks are isomorphic to $V$ and whose monodromy action is equal to $\rho$. 
\begin{prop}\label{prop_iso1}
Given any $\rho$ as above and $\sF\in D^b_c(Y, \dR)$, there is a canonical isomorphism
\be\label{re}
\H^i_c(Y,  \sF\otimes_\dR L_\rho) \cong \H^i\big(\sM_!(Y,\sF) \Lotimes_{\dR[G]}V_{\rho}\big)
\ee
where $\Lotimes_{\dR[G]}$ denotes the derived tensor product of right and left $\dR[G]$-modules. 
\end{prop}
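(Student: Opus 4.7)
The strategy is to recognize the local system $L_\rho$ itself as a tensor product of $\sL_G$ with $V_\rho$ over $\dR[G]$, after which the formula becomes a projection-formula-type commutation of $Rq_!$ with derived tensor product along $\dR[G]$.

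First I would establish the identification $L_\rho \cong \sL_G \Lotimes_{\dR[G]} V_\rho$ in $D^b_c(Y, \dR)$. Since $p : \widetilde{Y} \to Y$ is a covering map with deck group $G$, over any evenly covered open $U \subset Y$ the sheaf $\sL_G|_U \cong Rp_!\dR_{\widetilde{Y}}|_U$ is canonically isomorphic to $\dR[G]_U$ as a sheaf of right $\dR[G]$-modules, with monodromy around a loop representing $g \in G$ given by right multiplication by $g$. Hence $\sL_G$ is locally free, and in particular flat, as a sheaf of right $\dR[G]$-modules on $Y$, so the derived tensor product $\sL_G \Lotimes_{\dR[G]} V_\rho$ agrees with the ordinary tensor product $\sL_G \otimes_{\dR[G]} V_\rho$. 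A direct computation on the trivializing cover shows the stalks of $\sL_G \otimes_{\dR[G]} V_\rho$ are isomorphic to $V$ and the monodromy of $g$ acts as $\rho(g)$, matching $L_\rho$.

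Next I would chain together the associativity of tensor products and the description \eqref{eq_def2}:
\[
\sF \otimes_\dR L_\rho \;\cong\; \sF \otimes_\dR \bigl(\sL_G \otimes_{\dR[G]} V_\rho\bigr) \;\cong\; \bigl(\sF \otimes_\dR \sL_G\bigr) \Lotimes_{\dR[G]} V_\rho,
\]
and then apply $Rq_!$ to obtain
\[
Rq_!\bigl(\sF \otimes_\dR L_\rho\bigr) \;\cong\; Rq_!\bigl(\sF \otimes_\dR \sL_G\bigr) \Lotimes_{\dR[G]} V_\rho \;=\; \sM_*(Y, \sF) \Lotimes_{\dR[G]} V_\rho.
\]
Taking $H^i$ and using $H^i_c(Y, -) = H^i(Rq_!(-))$ yields \eqref{re}.

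The main technical step, and the one I expect to require the most care, is the commutation
\[
Rq_!\bigl((\sF\otimes_\dR \sL_G)\Lotimes_{\dR[G]} V_\rho\bigr) \;\cong\; Rq_!(\sF\otimes_\dR \sL_G) \Lotimes_{\dR[G]} V_\rho.
\]
I would justify it by choosing a resolution $P_\bullet \to V_\rho$ by free left $\dR[G]$-modules. For each $P_i = \bigoplus_{\alpha} \dR[G]$, tensoring produces a direct sum of copies of $\sF \otimes_\dR \sL_G$ (resp.\ of $\sM_*(Y, \sF)$), and $Rq_!$ commutes with small direct sums and with shifts; assembling the total complex on both sides then exhibits the desired quasi-isomorphism of complexes of $\dR$-modules. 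The flatness of $\sL_G$ over $\dR[G]$ ensures that no correction terms appear when passing between ordinary and derived tensor products, which is what makes the argument clean.
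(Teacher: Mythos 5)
Your proof is correct and is essentially the paper's argument: the paper also applies the projection formula to pass $Rq_!$ past $\Lotimes_{\dR[G]} V_\rho$, identifying $\sL_G \Lotimes_{\dR[G]} q^*V_\rho \cong L_\rho$ along the way. You simply unpack the two black boxes the paper invokes — you verify the identification $L_\rho \cong \sL_G \otimes_{\dR[G]} V_\rho$ on a trivializing cover (and note that $\sL_G$ is locally free, hence flat, over $\dR[G]$, so derived and ordinary tensor agree), and you justify the projection-formula commutation by resolving $V_\rho$ freely and using that $Rq_!$ commutes with direct sums and shifts.
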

\begin{proof}
By projection formula, we have
$$R{q}_!\big(\sF \otimes_\dR \sL_G\big) \Lotimes_{\dR[G]}V_\rho \cong R{q}_!\big( \sF \otimes_\dR \sL_G \Lotimes_{\dR[G]} q^*V_\rho\big) \cong R{q}_!\big( \sF \otimes_\dR L_\rho\big).$$
Formula \eqref{re} follows by taking $i$-th cohomology on both sides. 
\end{proof}

\bc\label{cor_nonzero} Let $Y$ be a smooth complex algebraic variety or a compact complex manifold. 
Given a field $\K$, if $\sF\in D^b_c(Y, \K)$ has the property that its Euler characteristic with compact support $\chi_c(Y,\sF)$ is not zero, then $\sM_!(Y, \sF)\neq 0$.
\ec
\begin{proof}
Suppose that $\sM_!(Y, \sF)=0$. Applying the above proposition to the case when $V_\rho$ is the trivial rank one $G$-representation, we get that $\H_c^k(Y, \sF)=0$ for all $k\in \Z$. Then $\chi_c(Y, \sF)=0$, contradicting our assumptions. 
\end{proof}
Note that in the algebraic case one has the equality 
$\chi(Y, \sF)=\chi_c(Y, \sF)$, e.g., see \cite[Proposition 4.1.23]{Di2}. Hence Corollary \ref{cor_nonzero} could also be formulated in terms of $\chi(Y, \sF)$.

\begin{cor}\label{cor_allzero}
Let $\sF$ be a $\K$-constructible complex on an abelian variety $A$. Then $\sM_!(\sF)=0$ if and only if $\H^i(A, \sF\otimes_\K L)=0$ for any $i$ and any rank one $\overline{\K}$-local system $L$, where $\overline{\K}$ is the algebraic closure of $\K$. 
\end{cor}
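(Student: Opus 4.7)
The plan is to reduce the statement to Proposition~\ref{prop_iso1}, which computes cohomology of $\sF$ twisted by a local system as a derived tensor product involving the Mellin transform. Since $\pi_1(A) = G \cong \Z^{2g}$ is finitely generated free abelian, rank one $\overline{\K}$-local systems on $A$ correspond bijectively to characters $\rho\colon G \to \overline{\K}^*$, equivalently to the closed points of $\spec(\overline{\K}[G])$, each of which has residue field $\overline{\K}$ by the Nullstellensatz. Since $A$ is compact, $\H^i_c(A,-) = \H^i(A,-)$, so applying Proposition~\ref{prop_iso1} with $\dR = \overline{\K}$ to $\sF \otimes_\K \overline{\K}$ and the one-dimensional $\overline{\K}[G]$-module $\overline{\K}_\rho$ yields
\[
\H^i(A, \sF \otimes_\K L_\rho) \cong \H^i\bigl((\sM_*(\sF) \otimes_\K \overline{\K}) \Lotimes_{\overline{\K}[G]} \overline{\K}_\rho\bigr).
\]
The forward direction is then immediate: if $\sM_*(\sF) = 0$, its base change to $\overline{\K}$ vanishes, so the derived tensor product and hence the cohomology vanish for every $\rho$.

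For the converse, write $N := \sM_*(\sF) \otimes_\K \overline{\K}$; the hypothesis gives $N \Lotimes_{\overline{\K}[G]} \overline{\K}_\rho = 0$ at every closed point of $\spec(\overline{\K}[G])$, and the goal is $N = 0$. Using the compactness of $A$ and a finite triangulation adapted to $\sF$, the complex $\sM_*(\sF)$ is representable by a bounded complex of finitely generated $\K[G]$-modules; since $\overline{\K}[G] \cong \overline{\K}[t_1^{\pm 1},\ldots,t_{2g}^{\pm 1}]$ is regular Noetherian, $N$ is then perfect. I would argue by a top-degree Nakayama argument: letting $j = \max\{i : H^i(N) \neq 0\}$, the right exactness of tensor product together with $H^{>j}(N) = 0$ yields
\[
H^j\bigl(N \Lotimes_{\overline{\K}[G]} \overline{\K}_\rho\bigr) = H^j(N) \otimes_{\overline{\K}[G]} \overline{\K}_\rho.
\]
Nakayama produces a maximal ideal (equivalently, a character $\rho$) at which the right-hand side is nonzero, contradicting the hypothesis; hence $N = 0$, and faithful flatness of $\overline{\K}/\K$ gives $\sM_*(\sF) = 0$.

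The only nontrivial technical input is the finite generation of $H^*(\sM_*(\sF))$ over $\K[G]$, which is where one has to exploit compactness of $A$ together with the cellular description of the Mellin transform; everything else is formal manipulation of the projection formula (already folded into Proposition~\ref{prop_iso1}) and standard commutative algebra (Nullstellensatz and top-degree Nakayama) over the Laurent polynomial ring $\overline{\K}[G]$.
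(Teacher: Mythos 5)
Your proposal is correct and follows the same approach as the paper's proof, which also reduces to Proposition~\ref{prop_iso1} together with the statement that a finitely generated module over a finitely generated $\K$-algebra is zero iff it vanishes at every $\overline{\K}$-point. The paper leaves the passage from modules to complexes implicit; your top-degree Nakayama argument supplies exactly that missing detail, and your observation that finite generation of $H^*(\sM_*(\sF))$ over $\K[G]$ is the key technical input (following from compactness of $A$) is the correct thing to justify.
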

\begin{proof}
Notice that the group ring $\K[\pi_1(A)]$ is isomorphic to a Laurent polynomial ring. Over a finitely generated $\K$-algebra, a finitely generated module is zero if and only if its restriction to every $\overline{\K}$-point is zero. So the assertion follows from Proposition \ref{prop_iso1}. \end{proof}


\section{Euler characteristic of perverse sheaves and characteristic cycles}\label{nef}
In this section, we recall several (semi-)positivity results for ample (resp., nef) vector bundles on projective manifolds. We use such results to deduce (semi-)positivity statements for the Euler characteristics of perverse sheaves on complex projective manifolds with ample (resp., nef) cotangent bundles. For perverse sheaves we use field coefficients.

First, let us recall the definition of ample resp. nef vector bundles. 
\bd\rm
If $E$ is a vector bundle on a  projective manifold $X$, denote by $\mathbf{P}(E)$ the projective bundle of hyperplanes in the fibers of $E$. 
A vector bundle $E$ on $X$ is called {\it ample} (resp. {\it nef}) if the line bundle $\mathcal{O}_E(1)$ on $\mathbf{P}(E)$ is ample (resp. nef). 
\ed

In \cite{FuLa}, Fulton and Lazarsfeld studied the positivity of Chern classes of ample vector bundles, and proved the following result. 
\begin{thm}{\rm (\cite[Theorem II]{FuLa})}
\label{thm_positive}
Let $X$ be a  projective manifold and let $E$ be a rank $r$ ample vector bundle on $X$. For any $r$-dimensional conic subvariety $C$ of $E$, that is a subvariety that is invariant under the $\C^*$-action on $E$, the intersection number satisfies
\[
\langle C, Z_E\rangle_E>0
\]
where $Z_E$ is the zero section of $E$. 
\end{thm}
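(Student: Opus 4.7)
The plan is to translate the intersection computation inside the non-compact total space $E$ into a positivity statement on the projective bundle $\mathbf{P}(E)$, where, by definition, ampleness of $E$ means ampleness of $\mathcal{O}_E(1)$. I would first reduce to the case where $C$ is irreducible and then split according to whether or not $C$ is contained in the zero section $Z_E$.

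In the principal sub-case, where $C \not\subset Z_E$, the $\C^*$-invariance of $C$ yields a projectivization $\tilde{C}$, an $(r-1)$-dimensional closed subvariety of $\mathbf{P}(E)$ (up to the usual duality convention). The crux of the argument is the identity
\[
\langle C, Z_E\rangle_E \;=\; \int_{\tilde{C}} c_1(\mathcal{O}_E(1))^{r-1}.
\]
I would derive this by passing to the projective completion $\overline{E} := \mathbf{P}(E \oplus \mathcal{O}_X)$, in which both $Z_E$ and the closure $\overline{C}$ become compact; a Segre-class computation (equivalently, Fulton's deformation to the normal cone) then identifies the refined intersection on the left with the top self-intersection of $c_1(\mathcal{O}_E(1))$ against $\tilde{C}$ on the right. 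Granted the identity, ampleness of $\mathcal{O}_E(1)$ and the Nakai--Moishezon criterion applied to the projective variety $\tilde{C}$ force strict positivity. In the remaining sub-case $C \subset Z_E$, the subvariety $C$ is $r$-dimensional in $X \cong Z_E$, and the self-intersection formula together with $N_{Z_E/E} \cong E$ collapses the problem to
\[
\langle C, Z_E\rangle_E \;=\; \deg\bigl(c_r(E|_C)\bigr),
\]
which is positive by the classical Bloch--Gieseker covering trick: after passing to a suitable finite cover of $C$, the top Chern class is represented by the vanishing locus of a general section of a twist of the pullback bundle, and that locus is nonempty, hence of positive degree, by ampleness.

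The step I expect to be the main obstacle is the intersection-theoretic identity in the principal sub-case. The difficulty is careful bookkeeping of multiplicities: the cone $C$ always meets $Z_E$ along a nontrivial subcone (at a minimum along the zero section over the image $\pi(C) \subset X$), so the intersection is never transverse and must be handled as a refined intersection via the excess-intersection formula on $\overline{E}$, or by means of the Segre class $s(C \cap Z_E,\, C)$. Once this bookkeeping is in place, the conversion to a top power of $c_1(\mathcal{O}_E(1))$ against $\tilde{C}$ is essentially mechanical, and the ampleness input closes out the argument.
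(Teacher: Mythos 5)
The paper does not prove Theorem~\ref{thm_positive}; it is quoted verbatim from Fulton--Lazarsfeld \cite{FuLa} with no argument given, so there is no internal proof to compare against. Judged on its own, your sketch is correct in the sub-case $C\subset Z_E$ (reducing via the self-intersection formula to $\deg c_r(E|_C)>0$, which Bloch--Gieseker gives), but the principal sub-case has a genuine gap, and it sits exactly where the real content of the Fulton--Lazarsfeld theorem lives.

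The claimed identity $\langle C,Z_E\rangle_E=\int_{\tilde C}c_1(\mathcal O_E(1))^{r-1}$ is false. Setting aside the fact that the projectivization of a cone $C\subset E$ naturally lands in the bundle of \emph{lines} in $E$ rather than in $\mathbf P(E)$ (and a general $\C^*$-invariant subvariety of $E$ has no canonical avatar in $\mathbf P(E)$, so ``up to the usual duality convention'' does not repair this), the formula also drops the lower Chern classes of $E$. The correct excess-intersection/Segre-class formula, working in $\overline E=\mathbf P(E\oplus\mathcal O)$ with $\overline C=\mathbf P(C\oplus 1)$ and $\xi=c_1(\mathcal O(1))$, is
\[
\langle C,Z_E\rangle_E \;=\; \bigl\{c(E)\cap s(C)\bigr\}_0 \;=\; \int_{\overline C} c_r\bigl(\mathcal O(1)\otimes p^*E\bigr) \;=\; \sum_{i=0}^{r}\int_{\overline C}\xi^i\,c_{r-i}(p^*E),
\]
and the terms with $i<r$ do not vanish in general. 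A concrete check: take $X=\mathbb P^1$, $E=\mathcal O(1)\oplus\mathcal O(2)$, and $C$ the sub-line-bundle $\mathcal O(1)\oplus 0$. Excess intersection gives $\langle C,Z_E\rangle_E=2$, whereas the projectivization of $C$ is a section of the line bundle over $\mathbb P^1$ on which the tautological $\mathcal O(1)$ has degree $-1$. So ampleness of $\mathcal O_E(1)$ together with Nakai--Moishezon cannot close the argument: the positivity of $\{c(E)\cap s(C)\}_0$ is precisely the Schur-polynomial positivity that is the main theorem of \cite{FuLa}, proved via Bloch--Gieseker covers to reduce to globally generated bundles and a Fulton--Hansen connectedness argument for degeneracy loci. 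What you flag as ``careful bookkeeping of multiplicities'' is where the whole substance of the proof actually resides, not a mechanical step after the fact.
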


In the above theorem, even though $E$ is not compact, the intersection number $\langle C, Z_E\rangle_E$ is well-defined. In fact, we can take any smooth compactification of $E$, and replace $C$ by its closure. The intersection number will not depend on the choice of the compactification. For more details we refer to \cite[Section 8.1.B]{La}. 

Together with Kashiwara's index theorem, we have the following positivity result on the Euler characteristics of perverse sheaves. 
\bp\label{pr1}
Let $X$ be a  projective manifold with ample cotangent bundle, and let $\sP$ be a nonzero perverse sheaf on $X$. Then $\chi(X, \sP)>0$. 
\ep

\begin{proof}
Kashiwara's global index theorem \cite{Kas} computes the Euler characteristic of any bounded constructible complex $\sP$ on $X$ by the formula:
\be \chi(X,\mathcal{P})=\langle CC(\sP) , [X]\rangle_{T^*X}, \ee
that is, the intersection index  in the cotangent bundle $T^*X$, of the {characteristic cycle} of $\mathcal{P}$ with the zero section of $T^*X$. Recall that the characteristic cycle $CC(\sP)$ is a formal $\bZ$-linear combination of irreducible conic Lagrangian cycles $T^*_ZX:=\overline{T^*_{Z_{\rm reg}}X}$ in $T^*X$ given by the conormal varieties of certain irreducible closed subvarieties $Z \subseteq X$. 

Since the characteristic cycle of a perverse sheaf is known to be effective (e.g., see \cite[Corollary 5.2.24]{Di2}), the positivity of $\chi(X,\sP)$ follows immediately from the ampleness of the cotangent bundle of $X$ together with Theorem \ref{thm_positive}. 
\end{proof}

Since compact ball quotients have ample cotangent bundles (e.g., see \cite[Construction 6.3.36]{La}), we get the following:
\bc\label{cor_ball}
If $X$ is a compact ball quotient and $\sP$ is a nonzero perverse sheaf on $X$, then $\chi(X,\sP)>0$.
\ec

The analogous result of Fulton-Lazarsfeld for nef vector bundles is proved by Demailly-Peternell-Schneider \cite{DPS}. 
\begin{thm}{\rm (\cite[Proposition 2.3]{DPS})}\label{thm_nef}
Let $X$ be a projective manifold and let $E$ be a rank $r$ nef vector bundle on $X$. For any $r$-dimensional conic subvariety $C$ of $E$, the intersection number satisfies
\[
\langle C, Z_E\rangle_E\geq 0
\]
where $Z_E$ is the zero section of $E$. 
\end{thm}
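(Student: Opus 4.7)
The theorem is the nef analogue of Fulton--Lazarsfeld's Theorem \ref{thm_positive}, and my plan is to derive it by approximating the nef bundle $E$ by ample $\mathbb{Q}$-twists and taking a limit. I would first split into two cases. If $C \subseteq Z_E$, then $C$ identifies with an $r$-dimensional subvariety of $X$ and the self-intersection formula yields $\langle C, Z_E\rangle_E = \int_C c_r(E|_C)$. Otherwise, the projectivization $\mathbf{P}(C) \subset \mathbf{P}(E)$ has dimension $r-1$, and I would pass to the compactification $\bar E = \mathbf{P}(E \oplus \mathcal{O}_X)$ with divisor at infinity $D_\infty \cong \mathbf{P}(E)$ and hyperplane class $h = c_1(\mathcal{O}_{\bar E}(1))$ (nef by hypothesis, since $h|_{D_\infty} = c_1(\mathcal{O}_E(1))$). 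A standard projective bundle computation, using that $[D_\infty] = h$ and $D_\infty \cap Z_E = \emptyset$ forces $h\cdot [Z_E]=0$, combined with $\pi_\ast[Z_E] = [X]$, yields $[Z_E] = \sum_{j=0}^r \pi^\ast c_j(E)\cdot h^{r-j}$ in the Chow ring of $\bar E$. Since $\bar C$ meets $Z_E$ in complementary dimension, $\langle C, Z_E\rangle_E = \int_{\bar E}[\bar C]\cdot[Z_E]$, and iterating $[\bar C]\cdot h = [\mathbf{P}(C)]$ reduces this to an integral over $\mathbf{P}(C)$ of a polynomial in $h$ and $\pi^\ast c_j(E)$.

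\textbf{Approximation step.} Fix an ample line bundle $A$ on $X$. For any rational $\epsilon > 0$, the $\mathbb{Q}$-class $h + \epsilon\pi^\ast c_1(A)$ on $\mathbf{P}(E)$ is ample (nef plus ample is ample), and for sufficiently divisible $m$ with $m\epsilon \in \mathbb{Z}$, this is represented by the ample vector bundle $S^m E\otimes A^{\otimes m\epsilon}$ on $X$. Crucially, $\mathbf{P}(E\otimes A^{\otimes\epsilon}) = \mathbf{P}(E)$ canonically, so the fixed cycle $\mathbf{P}(C)$ also lifts to a conic subvariety $C_\epsilon \subset E\otimes A^{\otimes\epsilon}$ of dimension $r$. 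By Theorem \ref{thm_positive}, $\langle C_\epsilon, Z_{E\otimes A^{\otimes\epsilon}}\rangle > 0$ for each $\epsilon > 0$. The polynomial expression obtained in the previous paragraph depends continuously on $\epsilon$ (through $h \mapsto h+\epsilon\pi^\ast c_1(A)$ and $c_j(E)\mapsto c_j(E\otimes A^{\otimes\epsilon})$), and it specializes at $\epsilon=0$ to $\langle C, Z_E\rangle_E$. Letting $\epsilon\to 0^+$ then yields $\langle C, Z_E\rangle_E \geq 0$. The case $C\subseteq Z_E$ is handled identically by applying the same perturbation to $\int_C c_r(E|_C)$.

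\textbf{Main obstacle.} The technical heart is the bookkeeping that ties together three things: the $\mathbb{Q}$-twisted approximation, the strict inequality coming from Fulton--Lazarsfeld applied to the integer realization $S^m E\otimes A^{\otimes m\epsilon}$, and the continuity/limit of the underlying polynomial in Chern classes. One must verify that the intersection number computed in the ample twist is given by the same polynomial formula, with $E$-data replaced by $(E\otimes A^{\otimes\epsilon})$-data, so that strict positivity at $\epsilon > 0$ passes to $\geq 0$ at $\epsilon = 0$ by continuity. A secondary point is handling potential non-transversality of $\bar C$ with $D_\infty$ along $\mathbf{P}(C)$, which requires a refined intersection interpretation of $[\bar C]\cdot h = [\mathbf{P}(C)]$; this is standard but deserves explicit mention.
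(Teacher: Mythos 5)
The paper itself supplies no proof of this statement---it is a verbatim citation of \cite[Proposition~2.3]{DPS}---so there is nothing internal to compare against. Your overall strategy (perturb the nef bundle to an ample $\mathbb{Q}$-twist $E\otimes A^{\epsilon}$, apply the Fulton--Lazarsfeld strict positivity of Theorem~\ref{thm_positive}, and let $\epsilon\to 0^{+}$ using the polynomial dependence of the intersection number on Chern data) is the standard and essentially correct route to this result.

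There is, however, a concrete gap in the step where you try to realize the ample $\mathbb{Q}$-twist as an honest vector bundle so that Theorem~\ref{thm_positive} applies. You assert that for $m\epsilon\in\mathbb{Z}$ the ample class $h+\epsilon\pi^{*}c_{1}(A)$ ``is represented by the ample vector bundle $S^{m}E\otimes A^{\otimes m\epsilon}$,'' and implicitly that the conic cycle $C$ transfers to it. This does not work: $S^{m}E\otimes A^{\otimes m\epsilon}$ has rank $\binom{m+r-1}{r-1}\neq r$, its projectivization is a strictly larger space into which $\mathbf{P}(E)$ only embeds via the Veronese, and there is no natural way to turn the $r$-dimensional conic cycle $C\subset E$ into a conic cycle of the correct (much larger) dimension inside $S^{m}E\otimes A^{\otimes m\epsilon}$; so Theorem~\ref{thm_positive} cannot be invoked on it. The standard repair is the Bloch--Gieseker covering trick: writing $\epsilon=p/q$, choose a finite flat surjection $\mu\colon X'\to X$ from a smooth projective variety with $\mu^{*}A\cong (A')^{\otimes q}$. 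Then $\mu^{*}E\otimes (A')^{\otimes p}$ is a genuine rank-$r$ ample vector bundle on $X'$, each irreducible component of $\mu^{-1}(C)$ is an $r$-dimensional conic subvariety of it, Theorem~\ref{thm_positive} applies component by component on $X'$, and the resulting positive number equals $\deg(\mu)$ times the $\mathbb{Q}$-twisted intersection number downstairs. Dividing by $\deg(\mu)$ and letting $\epsilon\to 0^{+}$ then gives $\langle C,Z_{E}\rangle_{E}\geq 0$. (Equivalently, one may cite that Fulton--Lazarsfeld positivity extends verbatim to ample $\mathbb{Q}$-twisted bundles via this same covering argument.) A minor further quibble: $\pi^{*}c_{1}(A)$ is not ample on $\mathbf{P}(E)$, so ``nef plus ample is ample'' is not the right justification that $h+\epsilon\pi^{*}c_{1}(A)$ is ample; what you actually need is the standard fact that a nef, $\pi$-ample class plus any positive multiple of the pullback of an ample class from the base is ample.
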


The following analog of Proposition \ref{pr1} can be proved by the same arguments. 
\begin{prop}\label{prop_nef}
Let $X$ be a projective manifold with nef cotangent bundle, and let $\sP$ be a nonzero perverse sheaf on $X$. Then $\chi(X, \sP)\geq 0$. In particular, $(-1)^{\dim X}\chi(X) \geq 0$.\end{prop}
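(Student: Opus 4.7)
The plan is to follow the exact argument used for Proposition \ref{pr1}, replacing the ample-case positivity result (Theorem \ref{thm_positive}) by its nef analog (Theorem \ref{thm_nef}). Specifically, I would start from Kashiwara's global index theorem, which gives
\[
\chi(X,\sP) = \langle CC(\sP), [X]\rangle_{T^*X},
\]
and then expand the characteristic cycle as a formal sum $CC(\sP) = \sum_Z m_Z \, T^*_Z X$ over certain irreducible closed subvarieties $Z \subseteq X$. Since $\sP$ is perverse, the multiplicities $m_Z$ are all non-negative; and each conormal variety $T^*_Z X$ is an irreducible conic subvariety of $T^*X$ of dimension exactly $\dim X = \operatorname{rank}(T^*X)$, so it is eligible as a cycle $C$ in the statement of Theorem \ref{thm_nef}.

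Next I would apply Theorem \ref{thm_nef} to $E = T^*X$, which is nef by hypothesis: each intersection number $\langle T^*_Z X, Z_{T^*X}\rangle_{T^*X}$ is $\geq 0$. Summing with the non-negative coefficients $m_Z$ yields $\chi(X,\sP) \geq 0$, which is the first claim.

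For the ``in particular'' assertion, I would specialize to $\sP = \bK_X[\dim X]$. Since $X$ is a smooth (hence locally complete intersection) connected projective manifold, this shifted constant sheaf is a nonzero perverse sheaf on $X$. The first part of the proposition then gives
\[
0 \leq \chi(X, \bK_X[\dim X]) = (-1)^{\dim X} \chi(X),
\]
as desired.

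There is essentially no obstacle beyond invoking the two inputs: the effectivity of $CC(\sP)$ for perverse $\sP$ (used already in the proof of Proposition \ref{pr1}) and Theorem \ref{thm_nef}. The only point worth double-checking is that each conormal cycle $T^*_Z X$ really is a conic subvariety of the correct dimension $r = \operatorname{rank}(T^*X) = \dim X$ and is $\mathbb{C}^*$-invariant, so that Theorem \ref{thm_nef} applies verbatim; this is standard and identical to the verification carried out in the ample case.
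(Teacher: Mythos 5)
Your proposal is correct and follows exactly the route the paper indicates: the paper states that Proposition~\ref{prop_nef} ``can be proved by the same arguments'' as Proposition~\ref{pr1}, substituting Theorem~\ref{thm_nef} for Theorem~\ref{thm_positive}, which is precisely what you do, and your handling of the ``in particular'' clause via $\sP = \bK_X[\dim X]$ is the standard specialization.
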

Since the intersection complex $\mathrm{IC}_Z$ of any pure-dimensional complex algebraic variety $Z$ is a perverse sheaf and since $\mathrm{IH}^k(Z)\cong \H^{k+\dim Z}(Z, \mathrm{IC}_Z)$, we have the following. 
\begin{cor}\label{cor_subvariety}
Let $X$ be a projective manifold with nef cotangent bundle, and let $Z$ be an irreducible closed subvariety of $X$. Then the intersection cohomology Euler characteristics of $Z$, that is, 
\[
\chi_{\mathrm{IH}}(Z)\coloneqq \sum_{0\leq k\leq 2\dim Z} (-1)^k \dim \mathrm{IH}^k(Z)
\]
satisfies 
\[
(-1)^{\dim Z}\chi_{\mathrm{IH}}(Z)\geq 0. 
\]
\end{cor}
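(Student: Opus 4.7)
The plan is to reduce the claim about the subvariety $Z$ to a direct application of Proposition \ref{prop_nef} for a suitable perverse sheaf on the ambient manifold $X$. I would let $i : Z \hookrightarrow X$ denote the closed inclusion and consider the pushforward $i_*\mathrm{IC}_Z$ of the intersection complex of $Z$, viewed as a complex on $X$. Since $i$ is a closed immersion, $Ri_* = i_*$ is $t$-exact for the perverse $t$-structure, so $i_*\mathrm{IC}_Z$ is a nonzero perverse sheaf on $X$ in the sense required by Proposition \ref{prop_nef}.

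Next I would rewrite the Euler characteristic of this perverse sheaf on $X$ in terms of $\chi_{\mathrm{IH}}(Z)$. Using the identification $\mathrm{IH}^k(Z) \cong \H^{k+\dim Z}(Z, \mathrm{IC}_Z)$ recalled just before the corollary, together with $\chi(X, i_*\mathrm{IC}_Z) = \chi(Z, \mathrm{IC}_Z)$, the reindexing $k \mapsto k - \dim Z$ gives
\[
\chi(X, i_*\mathrm{IC}_Z) \;=\; \sum_{k} (-1)^k \dim \H^k(Z, \mathrm{IC}_Z) \;=\; (-1)^{\dim Z}\, \chi_{\mathrm{IH}}(Z).
\]
Applying Proposition \ref{prop_nef} to the nonzero perverse sheaf $i_*\mathrm{IC}_Z$ on $X$ with nef cotangent bundle then yields $(-1)^{\dim Z}\chi_{\mathrm{IH}}(Z) \geq 0$, which is exactly the desired conclusion.

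The argument is essentially formal once Proposition \ref{prop_nef} is in hand, so there is no substantive obstacle. The two small points that deserve a line of justification are that pushforward along a closed immersion preserves perversity (and computes cohomology on $X$ without loss), and that the paper's perverse shift convention for $\mathrm{IC}_Z$ is exactly what produces the sign $(-1)^{\dim Z}$ appearing in the final inequality.
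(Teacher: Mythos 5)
Your proof is correct and matches the paper's argument, which is exactly to apply Proposition \ref{prop_nef} to the perverse sheaf $i_*\mathrm{IC}_Z$ on $X$ and use the shift $\mathrm{IH}^k(Z)\cong \H^{k+\dim Z}(Z,\mathrm{IC}_Z)$ to account for the sign $(-1)^{\dim Z}$. You have simply made explicit the pushforward along the closed embedding, which the paper leaves implicit.
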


\bex\rm
The class of complex projective manifolds whose cotangent bundles are nef is closed under taking products, subvarieties and finite unramified covers, and it includes smooth subvarieties of abelian varieties. It should also be noted that if $A$ is an abelian variety of dimension $g$, and $X \subset A$ is a smooth subvariety of dimension $n$ and codimension $g-n<n$, then the cotangent bundle of $X$ is not ample (see \cite{Deb, Sch}, and also, \cite[Example 7.2.3]{La}). On the other hand, for an arbitrary smooth $m$-dimensional projective variety $M$ and each $n \leq m/2$, there exist plenty of smooth $n$-dimensional subvarieties $X \subset M$ with ample cotangent bundle, e.g., complete intersection of sections of $M$ by general hypersurfaces of sufficiently high degrees in the ambient projective space, see \cite{BD,X}. 
\eex


\section{Locally constant constructible complexes}\label{locc}
Recall that a proper map $f:X \to Y$ of smooth complex algebraic varieties is a $\bZ$-homology fiber bundle if and only if the higher derived pushforwards $R^if_*\Z_X$ are local systems on $Y$ for all $i \geq 0$. If this is the case, we say that the constructible complex $Rf_*\Z_X$ is locally constant. 
This motivates the following definition (see also \cite{Bo, GM}).

\begin{defn}\rm
Let $M$ be a complex manifold, $\dR$ a commutative ring, and let $\sF$ be a bounded $\dR$-constructible complex. We say that $\sF$ is {\it locally constant} if the cohomology sheaves $\sH^k(\sF)$ are local systems for all $k$. 
\end{defn}

Since the subcategory of local systems is a weak Serre subcategory of the abelian category of constructible sheaves, we have the following Lemma (e.g., see \cite[Section 13.17]{St}). 
\begin{lemma}\label{lemma_cone}
The mapping cone of any morphism of locally constant bounded $\dR$-constructible complexes is also locally constant. 
\end{lemma}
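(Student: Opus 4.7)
The plan is to reduce the statement to the triangulated five lemma applied to the distinguished triangle coming from the mapping cone. Let $\phi:\sF\to\sG$ be a morphism of locally constant bounded $\dR$-constructible complexes on $M$, and set $\sC=\mathrm{Cone}(\phi)$. Then $\sC$ sits in a distinguished triangle
\[
\sF\xrightarrow{\phi}\sG\longrightarrow \sC\xrightarrow{+1}
\]
in $D^b_c(M,\dR)$ (constructibility of the cone is standard). Fix any contractible open $U\subset M$ and any point $x\in U$.

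First I would apply the two triangulated functors $R\Gamma(U,-)$ and $i_x^*$ to the triangle above, obtaining distinguished triangles in $D^-(\dR)$:
\[
R\Gamma(U,\sF)\to R\Gamma(U,\sG)\to R\Gamma(U,\sC)\xrightarrow{+1}
\]
and
\[
i_x^*\sF\to i_x^*\sG\to i_x^*\sC\xrightarrow{+1}.
\]
The natural transformation $R\Gamma(U,-)\to i_x^*$ (restriction to the stalk through $U$) gives a morphism between these two distinguished triangles; functoriality guarantees that all squares commute.

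Then I would invoke the hypothesis: the vertical maps $R\Gamma(U,\sF)\to i_x^*\sF$ and $R\Gamma(U,\sG)\to i_x^*\sG$ are quasi-isomorphisms since $\sF$ and $\sG$ are locally constant. By the triangulated five lemma (two out of three maps between distinguished triangles being quasi-isomorphisms forces the third), the induced map
\[
R\Gamma(U,\sC)\longrightarrow i_x^*\sC
\]
is a quasi-isomorphism as well. Since $U$ and $x\in U$ were arbitrary, $\sC$ is locally constant, which is what we wanted.

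There is no real obstacle here; the argument is a formal diagram chase, with the only point worth double-checking being the naturality of the restriction transformation $R\Gamma(U,-)\to i_x^*$ — and this is immediate since both functors are induced by pullback along $\{x\}\hookrightarrow U\hookrightarrow M$.
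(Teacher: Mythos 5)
Your argument is correct and is essentially the same as the paper's. You invoke the triangulated "two out of three" property for morphisms of distinguished triangles directly, while the paper unfolds the same distinguished triangle into long exact cohomology sequences and applies the classical five lemma; in both cases the crucial ingredient is the naturality of the restriction morphism $R\Gamma(U,-)\to i_x^*$, which you rightly flag.
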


\begin{prop}\label{prop_constant}
Let $\sF$ be a bounded $\dR$-constructible complex on a complex manifold $M$. The following conditions are equivalent:
\begin{enumerate}
\item $\sF$ is locally constant,
\item the perverse cohomology sheaves $^p\sH^k(\sF)$ are shifts of local systems for all $k$.
\end{enumerate}
\end{prop}
\begin{proof}
Since $\sF$ is an iterated extension of $^p\sH^k(\sF)[-k]$, $k\in \Z$, the implication $(2)\Rightarrow (1)$ follows from Lemma \ref{lemma_cone}. Conversely, if $\sH^k(\sF)$ are local systems for all $k$, then $\sF$ is constructible with respect to the trivial stratification of $M$, that is, the one with a single stratum. 
The truncations $^p\tau^{\leq k}$ (resp.  $^p\tau^{\geq k}$) and $\tau^{\leq k-\dim M}$ (resp., $\tau^{\geq k-\dim M}$) are then equal, where, $({^p\tau}^\leq, {^p\tau}^\geq)$ denotes the perverse truncation. Thus, 
\[
^p\sH^k(\sF)\cong \sH^{k-\dim M}(\sF)[\dim M]
\]
is the shift of a local system. 
\end{proof}


\section{The rational and integral Bobadilla-Koll\'ar questions}\label{sec5}
In this section, we answer positively the rational Bobadilla-Koll\'ar question for compact ball quotients and the integral Bobadilla-Koll\'ar question for abelian varieties. The rational version is easier to handle thanks to the decomposition theorem. We will first simultaneously answer the rational Bobadilla-Koll\'ar question for both ball quotients and abelian varieties. For the integral version, we have to work with fields of positive characteristics, in which case neither the decomposition theorem nor the stronger generic vanishing theorem \cite[Theorem 1.3]{BSS} are available. We use characteristic cycles and the geometry of abelian varieties to prove a key non-vanishing property of the Mellin transformation, which is sufficient to answer the integral  Bobadilla-Koll\'ar question for abelian varieties. 

Let $Y$ be an aspherical compact complex manifold with fundamental group $G$ and of dimension $d$. Let $\sF$ be a $\K$-constructible complex on $Y$, where $\K$ is a field, and assume that each cohomology group $$V^j:=\H^j(\sM_!(\sF))$$ is a finite dimensional $\K$-vector space.  Notice that $V^j$ has a natural $\K[G]$-module structure, and we denote the corresponding $\K$-local system on $Y$ by $L^j$. 
\begin{lemma}\label{lemma_morphism}
Under the above notations and assumptions, let $V^r=\H^r(\sM_!(\sF))$ be the  nonzero cohomology group of the lowest degree. Then there exists a morphism $\phi: L^r[2d-r]\to \sF$ such that the induced morphism $\sM_!(\phi): \sM_!(L^r[2d-r])\to \sM_!(\sF)$ induces an isomorphism
\[
\H^r\Big( \sM_!\big(L^r[2d-r]\big)\Big)\cong \H^r\big( \sM_!(\sF)\big). 
\]
\end{lemma}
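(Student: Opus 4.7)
The target-side Mellin is immediate: by Example~\ref{ex9}, $\sM_*(L^r)\cong V^r[-2d]$, so $\sM_*(L^r[2d-r])\cong V^r[-r]$. Since $r$ is the lowest nonzero cohomological degree of $\sM_*(\sF)$, the standard truncation $\tau^{\leq r}\sM_*(\sF) = V^r[-r]$ produces a canonical morphism
\[
\alpha: V^r[-r]\cong \sM_*(L^r[2d-r]) \longrightarrow \sM_*(\sF)
\]
in $D^b(\K[G])$ that is the identity on $H^r$. The task is to lift $\alpha$ to some $\phi: L^r[2d-r]\to \sF$.

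My plan is to compute $\mathrm{Hom}_{D^b_c(Y,\K)}(L^r[2d-r], \sF)$ and exhibit within it a distinguished element produced from $\mathrm{id}_{V^r}$. Because $L^r$ is a local system, $R\mathcal{H}om(L^r,\sF) \cong (L^r)^\vee\otimes \sF$. Using the compactness of $Y$ (so $R\Gamma = R\Gamma_c$) together with Proposition~\ref{prop_iso1} applied to $(L^r)^\vee$ (with monodromy $V^{r,\vee}$):
\[
\mathrm{Hom}_{D^b_c(Y,\K)}(L^r[2d-r], \sF) \;=\; H^{r-2d}\bigl(\sM_*(\sF) \otimes^L_{\K[G]} V^{r,\vee}\bigr).
\]
Applying $-\otimes^L_{\K[G]} V^{r,\vee}$ to $\alpha$ and passing to $H^{r-2d}$ yields a natural map
\[
T:\; H^{r-2d}\bigl(V^r[-r]\otimes^L_{\K[G]} V^{r,\vee}\bigr) \longrightarrow H^{r-2d}\bigl(\sM_*(\sF)\otimes^L_{\K[G]} V^{r,\vee}\bigr).
\]
Under the identification $V^r\otimes_\K V^{r,\vee}\cong \mathrm{End}_\K(V^r)$ (with adjoint $G$-action), the source equals $H_{2d}(G,\mathrm{End}_\K(V^r))$. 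Now $Y = K(G,1)$ is a closed oriented aspherical real $2d$-manifold, so $G$ is a Poincar\'e-duality group of formal dimension $2d$ with trivial orientation character (complex manifolds are canonically oriented), giving
\[
H_{2d}\bigl(G, \mathrm{End}_\K(V^r)\bigr)\cong \mathrm{End}_\K(V^r)^G = \mathrm{End}_{\K[G]}(V^r).
\]
Set $\phi := T(\mathrm{id}_{V^r})$.

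It remains to verify that $\sM_*(\phi)$ agrees with $\alpha$ on $H^r$. This is where I expect the main technical work to lie: by naturality of Proposition~\ref{prop_iso1} in $\sF$, tracing the identity class through the diagram shows that the Mellin image of $\phi$ and $\alpha$ have the same $H^r$-component, namely $\mathrm{id}_{V^r}$. The reduction to the tautological case $\sF = L^r[2d-r]$ (where all three Hom groups equal $\mathrm{End}_{\K[G]}(V^r)$, and $\sM_*$ together with the Prop.~\ref{prop_iso1} and Poincar\'e-duality identifications all send the identity to the identity) pins down the diagram, after which the general case follows by naturality in $\sF$. The principal obstacle is thus this careful intertwining of Proposition~\ref{prop_iso1}'s computation with the Poincar\'e-duality identification of $H_{2d}(G,\mathrm{End}_\K(V^r))$; once that compatibility is in hand, the lemma follows.
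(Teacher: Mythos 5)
Your argument follows essentially the same route as the paper's proof. The spectral sequence that the paper runs on the canonical truncation of $\sM_*(\sF)$, and the resulting degeneration $E_2^{-2d,r}\cong E_\infty^{-2d,r}$, is exactly the edge-map content of your truncation morphism $\alpha\colon V^r[-r]=\tau^{\leq r}\sM_*(\sF)\to\sM_*(\sF)$; applying $-\otimes^L_{\K[G]}(V^r)^\vee$ and taking $H^{r-2d}$ is your $T$, and the paper then chains exactly the same identifications via Proposition~\ref{prop_iso1} to land in $\mathrm{Hom}(L^r[2d-r],\sF)$ and pick out the class of the identity. The one cosmetic difference is how the source of $T$ is identified with $\mathrm{End}_{\K[G]}(V^r)$: you invoke Poincar\'e duality for the $\mathrm{PD}^{2d}$-group $G$ to get $H_{2d}(G,\mathrm{End}_\K(V^r))\cong\mathrm{End}_\K(V^r)^G$, whereas the paper uses Example~\ref{ex9} (i.e.\ $\sM_*(L^r)\cong V^r[-2d]$) together with Proposition~\ref{prop_iso1} applied with $\sF=L^r$; these are equivalent, since Example~\ref{ex9} is itself Poincar\'e duality for the contractible $2d$-manifold $\widetilde Y$. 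One small remark: like the paper, you do not fully carry out the final check that $\sM_*(\phi)$ agrees with $\alpha$ on $H^r$ (the paper simply asserts ``Then $\phi$ satisfies the desired property''); your naturality sketch, reducing to the tautological case $\sF=L^r[2d-r]$ and using functoriality of Proposition~\ref{prop_iso1} in $\sF$, is the right way to make this rigorous.
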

\begin{proof}
The canonical truncation on $\sM_!(\sF)$ induces a spectral sequence 
\[
E^{ij}_2=\H^i\Big(\H^j\big(\sM_!(\sF)\big)\Lotimes_{\K[G]}(V^r)^\vee\Big)\Longrightarrow \H^{i+j}\Big(\sM_!(\sF)\Lotimes_{\K[G]}(V^r)^\vee\Big).
\]
In fact, choosing a free resolution $F^\bullet$ of $(V^r)^\vee$ and a complex $G^\bullet$ representing $\sM_!(\sF)$, it is the spectral sequence of the double complex $G^\bullet\otimes_{\K[G]} F^\bullet$. 

By assumption, $\H^j(\sM_!(\sF))=0$ if $j<r$. By Example \ref{ex9} and Proposition~\ref{prop_iso1}, we have
\[
\H^i\Big(\H^j\big(\sM_!(\sF)\big)\Lotimes_{\K[G]}(V^r)^\vee\Big)=\H^i\Big(V^j\Lotimes_{\K[G]}(V^r)^\vee\Big)\cong \H^{i+2d}\Big(Y, L^j\otimes_\K (L^r)^\vee \Big),
\]
which vanishes when $i<-2d$. Therefore, 
\[
E_2^{-2d, r}\cong E_\infty^{-2d, r} \cong \H^{r-2d}\Big(\sM_!(\sF)\Lotimes_{\K[G]}(V^r)^\vee\Big),
\]
that is, 
\[
\H^{-2d}\Big(V^r\Lotimes_{\K[G]}(V^r)^\vee\Big)\cong  \H^{r-2d}\Big(\sM_!(\sF)\Lotimes_{\K[G]}(V^r)^\vee\Big).
\]
Since $\sM_!(L^r)\cong V^r[-2d]$ (cf. Example \ref{ex9}), by Proposition~\ref{prop_iso1} we have canonical isomorphisms
\[
\H^0\big(Y, L^r\otimes_\K (L^r)^\vee\big)\cong \H^0\Big(\sM_!(L^r)\Lotimes_{\K[G]} (V^r)^\vee\Big)\cong \H^{-2d}\Big(V^r\Lotimes_{\K[G]}(V^r)^\vee\Big).
\]
On the other hand, also by Proposition \ref{prop_iso1}, we have canonical isomorphisms
\[
\mathrm{Hom}(L^r[2d-r], \sF)\cong \H^{r-2d}(Y, \sF\otimes (L^r)^\vee)\cong \H^{r-2d}\Big(\sM_!(\sF)\Lotimes_{\K[G]}(V^r)^\vee\Big).
\]
Combining the above three displayed equations, we have a natural isomorphism
\[
\H^0\big(Y, L^r\otimes_\K (L^r)^\vee\big)\cong \mathrm{Hom}(L^r[2d-r], \sF).
\]
There exists an element in $\H^0(Y, L^r\otimes_\K (L^r)^\vee)$ associated to the identity map on $L^r$. We let $\phi$ be the corresponding element in $\mathrm{Hom}(L^r[2d-r], \sF)$. Then $\phi$ satisfies the desired property. 
\end{proof}

\begin{prop}\label{prop_finite1}
Let $Y$ be an aspherical projective manifold of dimension $d$, and let $\K$ be any field. 
 Let $\sP$ be a simple $\K$-perverse sheaf on $Y$. Suppose that $\sM_!(\sP)$ is nonzero and the cohomology of $\sM_!(\sP)$ in every degree is finite dimensional. Then $\sP$ is a shift of a local system. 
\end{prop}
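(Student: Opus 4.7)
The plan is to apply Lemma \ref{lemma_morphism} to $\sP$ and then exploit the perverse $t$-structure together with the simplicity of $\sP$. Let $r$ be the smallest integer with $V^r := \H^r(\sM_*(\sP)) \neq 0$ (which exists since $\sM_*(\sP) \neq 0$), and let $L^r$ denote the associated $\K$-local system on $Y$. The lemma then furnishes a morphism $\phi\colon L^r[2d-r] \to \sP$ inducing an isomorphism on $\H^r(\sM_*(-))$; since $V^r \neq 0$, this $\phi$ is itself nonzero.

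The crux will be to show $r = d$ via the perverse $t$-structure on $D^b_c(Y, \K)$. Since $Y$ is smooth of complex dimension $d$ and $L^r$ is a local system, $L^r[d]$ is perverse, so $L^r[2d-r] = (L^r[d])[d-r]$ has a unique nonzero perverse cohomology sheaf, sitting in perverse degree $r - d$. If $r > d$, then $L^r[2d-r] \in {}^{p}D^{\geq 1}(Y)$ while $\sP \in {}^{p}D^{\leq 0}(Y)$, so the $t$-structure axiom forces $\Hom(L^r[2d-r], \sP) = 0$. If instead $r < d$, I would rewrite using the shift identity $\Hom(L^r[2d-r], \sP) \cong \Hom(L^r[d], \sP[r-d])$ and note that $\sP[r-d] \in {}^{p}D^{\geq d-r} \subseteq {}^{p}D^{\geq 1}$ while $L^r[d] \in {}^{p}D^{\leq 0}$, so the Hom again vanishes. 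Both cases contradict $\phi \neq 0$, forcing $r = d$.

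We then have a nonzero morphism $\phi\colon L^d[d] \to \sP$ between perverse sheaves. Since $\sP$ is simple in $\Perv(Y)$, the image of $\phi$ is a nonzero perverse subsheaf of $\sP$ and hence equals $\sP$, so $\phi$ is an epimorphism. To conclude, I would write $\sP \cong \mathrm{IC}_Z(M)$ for an irreducible closed $Z \subseteq Y$ and a simple local system $M$ on a dense smooth open of $Z$, and show $Z = Y$. The cohomology sheaves of $\mathrm{IC}_Z(M)$ are concentrated in standard degrees $[-\dim Z, 0]$, so $\mathrm{IC}_Z(M)[-d]$ lies in $D^{\geq d - \dim Z}$; if $\dim Z < d$, this lies in $D^{\geq 1}$, while $L^d \in D^{\leq 0}$. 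The standard $t$-structure axiom then gives $\Hom(L^d[d], \mathrm{IC}_Z(M)) \cong \Hom(L^d, \mathrm{IC}_Z(M)[-d]) = 0$, contradicting $\phi \neq 0$. Therefore $Z = Y$, and $\sP \cong \mathrm{IC}_Y(M) = M[d]$ is a shift of a local system.

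The main obstacle is the middle $t$-structure step pinning down $r = d$: both the upper bound on $r$ (from the perversity of $\sP$) and the lower bound on $r$ (from $L^r[d]$ being perverse) are needed, and they must be played off against each other to collapse to the single value $r = d$ at which $\phi$ becomes a morphism in the heart $\Perv(Y)$. The remaining steps are routine applications of the simplicity of $\sP$ and the IC classification of simple perverse sheaves.
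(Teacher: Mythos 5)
Your proposal correctly identifies Lemma~\ref{lemma_morphism} as the starting point, and the $r<d$ case and the final step (pinning down $\sP$ as a quotient of $L^d[d]$, then ruling out proper support via the standard $t$-structure) are both sound and in fact a bit more explicit than the paper's own closing sentence. However, there is a genuine gap in the $r>d$ case.

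You assert that if $L^r[2d-r]\in{}^pD^{\geq 1}(Y)$ and $\sP\in{}^pD^{\leq 0}(Y)$, then the $t$-structure axiom forces $\Hom(L^r[2d-r],\sP)=0$. This runs the axiom backwards. The $t$-structure axiom gives $\Hom(A,B)=0$ for $A\in{}^pD^{\leq 0}$ and $B\in{}^pD^{\geq 1}$, not for $A\in{}^pD^{\geq 1}$ and $B\in{}^pD^{\leq 0}$. In the latter configuration the Hom group computes higher Ext's between objects of the heart and is generally nonzero (e.g.\ on a point, $\Hom(V[-1],W)=\Ext^1(V,W)$). So the $t$-structure alone does not exclude $r>d$, and no purely formal argument on $Y$ can, since the upper bound on $r$ is a genuine geometric constraint coming from the universal cover.

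The paper instead establishes $r\leq d$ by observing that $p^*\sP$ is perverse on the $d$-dimensional manifold $\widetilde{Y}$ (pullback under the \'etale covering map preserves perversity), and therefore its compactly supported hypercohomology $\H^k_c(\widetilde{Y},p^*\sP)=\H^k(\sM_*(\sP))$ vanishes for $k\notin[-d,d]$; this is the standard bound for perverse sheaves (\cite[Proposition~5.2.20]{Di2}). Replacing your $r>d$ paragraph with this vanishing statement (the $r<d$ paragraph can stay as you wrote it, and in fact is exactly the paper's lower bound) repairs the argument.
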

\begin{proof}
As in Lemma \ref{lemma_morphism}, we let $\H^r(\sM_!(\sP))$ be the nonzero cohomology group of the lowest degree, with corresponding local system on $Y$ denoted by $L^r$. By  Lemma \ref{lemma_morphism} there exist a nonzero morphism $\phi: L^r[2d-r]\to \sP$. By definition, $\sM_!(\sP)=R\widetilde{q}_!(p^*\sP)$, where $p: \widetilde{Y}\to Y$ is the universal covering map and $\widetilde{q}: \widetilde{Y}\to pt$ is the projection to a point. Since $p^*\sP$ is a perverse sheaf on $\widetilde{Y}$ (cf. \cite[Proposition 5.2.13 and Corollary 5.2.15]{Di2}), the cohomology 
\[
\H^k\big(R\widetilde{q}_!(p^*\sP)\big)\cong \H_c^k\big(\widetilde{Y}, p^*\sP\big)
\]
is zero if $k\notin [-d, d]$ (e.g., see \cite[Proposition 5.2.20]{Di2}). Thus, we have $r\leq d$. On the other hand, since \[
L^r[2d-r]\in \,^p D_c^{\leq r-d}(Y, \K)\quad \textrm{and\quad} \sP\in  \,^p D_c^{\geq 0}(Y, \K),
\]
the existence of a nonzero morphism $\phi: L^r[2d-r]\to \sP$ implies that $r-d\geq 0$. Combining the two inequalities, we have that $r=d$. Thus, we have a nonzero morphism of perverse sheaves $L^r[d]\to \sP$. Since $\sP$ is simple, it must be a quotient of $L^r[d]$ in the category of perverse sheaves. Therefore, $\sP$ is the shift of a local system. 
\end{proof}
\begin{thm}\label{th23}
Let $f : X \to Y$ be a morphism of smooth projective varieties.
Let $\widetilde{Y}$ be the universal cover of $Y$, and assume that $\widetilde{X}\coloneqq X\times_{Y} \widetilde{Y}$  is homotopy equivalent to a finite CW-complex. Suppose that $Y$ is either an abelian variety or an aspherical projective manifold with an ample cotangent bundle. 
Then $f$ is a $\Q$-homology fiber bundle.
\end{thm}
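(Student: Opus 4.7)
The plan is to combine the Beilinson--Bernstein--Deligne--Gabber (BBD) decomposition theorem with Proposition \ref{prop_finite1}, with the non-vanishing of the Mellin transformation being the key input that distinguishes the two geometric cases. By the equivalence in Proposition \ref{prop_constant}, it suffices to show that every simple perverse constituent of ${}^p\sH^k\bigl(Rf_*\Q_X[\dim X]\bigr)$ is a shift of a $\Q$-local system on $Y$; then all sheaves $R^if_*\Q_X$ are local systems and $f$ is a $\Q$-homology fiber bundle. Since $f$ is proper and $X$ is smooth, the decomposition theorem splits $Rf_*\Q_X[\dim X]$ as a direct sum of shifts of simple perverse sheaves, so I may treat each such summand $\sP$ individually.

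Next I will verify both hypotheses of Proposition \ref{prop_finite1} for each such $\sP$. For finite dimensionality of the cohomology of $\sM_*(\sP)$ in each degree, proper base change along the Cartesian square defining $\widetilde{X}=X\times_Y\widetilde{Y}$, together with properness of the lifted map $\widetilde{f}\colon\widetilde{X}\to\widetilde{Y}$, yields
\begin{equation*}
\sM_*(Rf_*\Q_X)\,\cong\, R\Gamma_c(\widetilde{X},\Q),
\end{equation*}
whose cohomology is finite dimensional by the hypothesis that $\widetilde{X}$ is homotopy equivalent to a finite CW-complex. Additivity of $\sM_*$ and the fact that $\sM_*(\sP)$ is a direct summand of the left-hand side then transfer finiteness to each $\sP$.

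For the non-vanishing $\sM_*(\sP)\neq 0$, the argument splits according to $Y$. When $Y$ has ample cotangent bundle (e.g.\ a compact ball quotient), Proposition \ref{pr1} gives $\chi(Y,\sP)>0$, and Corollary \ref{cor_nonzero} forces $\sM_*(\sP)\neq 0$. When $Y$ is an abelian variety $A$ the cotangent bundle is only trivial (hence nef but not ample), so I plan to invoke the generic vanishing theory for perverse sheaves on abelian varieties \cite{BSS}, which implies that every simple $\Q$-perverse sheaf on $A$ is, up to translation, of the form $\pi^*\sP_0$ for a surjective homomorphism $\pi\colon A\to B$ of abelian varieties and a ``characteristic'' simple perverse sheaf $\sP_0$ on $B$ with $\chi(B,\sP_0)>0$. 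The lifted map $\widetilde{\pi}$ of universal covers is a trivial $\mathbb{C}^k$-bundle with $k=\dim\ker\pi$, and the projection formula yields $\sM_*^A(\pi^*\sP_0)\cong\sM_*^B(\sP_0)[-2k]$, which is nonzero by Corollary \ref{cor_nonzero} applied to $\sP_0$ on $B$; translation by $a\in A$ only conjugates the $\Q[\pi_1(A)]$-module structure on the answer and so preserves non-vanishing.

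The main obstacle is the non-vanishing in the abelian variety case: Kashiwara's index theorem only gives $\chi(A,\sP)\geq 0$ (since the cotangent bundle is merely nef), so one cannot mimic the ample cotangent argument by sheer positivity and must genuinely use the finer structural information about simple perverse sheaves on abelian varieties supplied by generic vanishing. Once both non-vanishing statements are established, Proposition \ref{prop_finite1} forces each simple perverse summand of $Rf_*\Q_X[\dim X]$ to be a shift of a local system, and Proposition \ref{prop_constant} concludes that $Rf_*\Q_X$ is locally constant, i.e., $f$ is a $\Q$-homology fiber bundle.
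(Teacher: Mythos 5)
Your proof follows the same skeleton as the paper's: apply the decomposition theorem to $Rf_*\Q_X$, establish finiteness and non-vanishing of the Mellin transform for each simple perverse summand, then invoke Proposition~\ref{prop_finite1} and Proposition~\ref{prop_constant} to conclude that $Rf_*\Q_X$ is locally constant. The ample-cotangent-bundle case is handled exactly as the paper does, via Proposition~\ref{pr1} and Corollary~\ref{cor_nonzero}. In the abelian case the paper's primary justification for $\sM_*(\sP)\neq 0$ is the Fourier--Mukai equivalence of Laumon and Rothstein (a nonzero constructible complex has a nonzero Fourier--Mukai transform, hence $\H^*(A,\sP\otimes L)\neq 0$ for some rank-one local system $L$, and Corollary~\ref{cor_allzero} converts this to $\sM_*(\sP)\neq 0$); the paper cites the structure theorem of Weissauer and Schnell only as a stronger alternative. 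Your choice of that alternative is legitimate, but two imprecisions in it need fixing.

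First, the structure theorem you invoke is due to Weissauer and Schnell \cite[Theorem 2]{We}, \cite[Theorem 7.6]{Sc15}, not to \cite{BSS} (which establishes generic vanishing, a different statement). More importantly, the correct statement is that a simple perverse sheaf $\sP$ on $A$ with $\chi(A,\sP)=0$ is, \emph{up to twist by a rank one local system} --- not up to translation --- the pullback (with the appropriate shift) of a simple perverse sheaf $\sP_0$ on a proper quotient $B=A/A'$ with $\chi(B,\sP_0)>0$. Translation changes the support of a perverse sheaf, while twisting changes its monodromy, and it is the latter operation that appears here. Your closing remark that ``translation conjugates the $\Q[\pi_1(A)]$-module structure'' therefore addresses the wrong operation; for a twist by $L_\chi$ the relevant (and equally easy) observation is that $\sM_*(\sF\otimes L_\chi)$ is the $\Q[G]$-complex $\sM_*(\sF)$ with its module structure twisted by the character $\chi$, which preserves non-vanishing. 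Second, your finiteness argument identifies $\sM_*(Rf_*\Q_X)\cong R\Gamma_c(\widetilde X,\Q)$ correctly, but then concludes finite dimensionality of its cohomology ``by the hypothesis that $\widetilde X$ is homotopy equivalent to a finite CW-complex.'' Compactly supported cohomology is not a homotopy invariant, so this does not follow directly; you need, as the paper does, to apply Poincar\'e duality on the smooth complex manifold $\widetilde X$ to pass from the (homotopy-invariant, hence finite-dimensional) groups $\H^k(\widetilde X,\Q)$ to $\H^k_c(\widetilde X,\Q)$.
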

\begin{proof}
By our assumptions, the cohomology groups $\H^k(\widetilde{X}, \Q)$ are finite dimensional $\Q$-vector spaces. 
The assertion in the theorem is equivalent to showing that $Rf_*\Q_X$ is locally constant.

Since $f$ is proper, by Poincar\'e duality and proper base change (e.g., see \cite[Theorem 5.1.7]{Ma}) we have 
\[
\H^{2\dim X-k}\big(\widetilde{X}, \Q\big)^\vee\cong \H_c^k\big(\widetilde{X}, \Q\big)\cong \H_c^k\big(\widetilde{Y}, R\widetilde{f}_!\Q_{\widetilde{X}}\big)\cong \H^k_c\big(\widetilde{Y}, p^*R{f}_!\Q_{X}\big) \cong \H^k_c\big(\widetilde{Y}, p^*R{f}_*\Q_{X}\big)
\]
where $\widetilde{f}: \widetilde{X}\to \widetilde{Y}$ is the lifting of $f: X\to Y$, and $p: \widetilde{Y}\to Y$ is the universal covering map. Thus, the vector spaces $\H^k_c(\widetilde{Y}, p^*(R{f}_*\Q_{X}))$ are finite dimensional for all $k$. 
Applying the decomposition theorem \cite{BBD} for the proper map $f:X \to Y$ yields is a decomposition
\[
R{f}_*\Q_{X}\cong \bigoplus_{1\leq i\leq l} \sP_i[n_i]
\]
where $n_i\in \Z$ and $\sP_i$ are nonzero simple perverse sheaves. Therefore, each $\sP_i$ has the property that $\H^k_c(\widetilde{Y}, p^*(\sP_i))$ are finite dimensional for all $k$. 

By the definition of Mellin transformation, we have
\[
\H^k_c\big(\widetilde{Y}, p^*\sP_i\big)\cong \H^k\big(\sM_!(\sP_i)\big).
\]
Thus, the cohomology groups of $\sM_!(\sP_i)$ are all finite dimensional $\Q$-vector spaces. To apply Proposition \ref{prop_finite1}, we need to prove that $\sM_!(\sP_i)$ are nonzero. When $Y$ is an aspherical projective manifold with an ample cotangent bundle (e.g., a compact ball quotient), this follows from Corollary \ref{cor_nonzero} and Proposition  \ref{pr1}. When $Y$ is an abelian variety, this follows from the Riemann-Hilbert correspondence and the Fourier-Mukai transformation being an equivalence of categories (\cite{Lau} and \cite{Ro}). See \cite[Theorem 2]{We} and \cite[Theorem 7.6]{Sc15} for stronger results. 

Now, by Proposition \ref{prop_finite1}, each $\sP_i$ is a shifted local system, and hence $R{f}_*\Q_{X}\cong \bigoplus_{1\leq i\leq l} \sP_i[n_i]$ is locally constant. 
\end{proof}

In the remainder of this section, we deal with the integral Bobadilla-Koll\'ar question for abelian varieties. 

\begin{lemma}\label{lemnew}
Let $Y$ be an aspherical complex manifold of dimension $d$, and fix a field $\K$. Let $\sF$ be a locally constant $\K$-constructible complex with $\sM_!(\sF)=0$. Then $\sF=0$.
\end{lemma}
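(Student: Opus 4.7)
The plan is to show that under the locally-constant hypothesis on $\sF$, the Mellin transform $\sM_*(\sF)$ actually recovers each cohomology sheaf of $\sF$ (up to a degree shift), so that its vanishing immediately forces $\sF = 0$.

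First, since $\sF$ is locally constant, Proposition~\ref{prop_constant} tells me that every cohomology sheaf $\sH^q(\sF)$ is a local system on $Y$; let $V^q$ denote the underlying $\K[G]$-module. Because $Y$ is aspherical, the universal cover $\widetilde{Y}$ is contractible, hence in particular simply connected, so the pullback $p^{\ast}\sH^q(\sF)$ is the constant sheaf $V^q_{\widetilde{Y}}$ on $\widetilde{Y}$. I then feed the bounded complex $p^{\ast}\sF$ into the hypercohomology spectral sequence with compact supports
$$E_2^{p,q} = \H^p_c\bigl(\widetilde{Y}, p^{\ast}\sH^q(\sF)\bigr) = \H^p_c\bigl(\widetilde{Y}, V^q_{\widetilde{Y}}\bigr) \Longrightarrow \H^{p+q}_c\bigl(\widetilde{Y}, p^{\ast}\sF\bigr) = \H^{p+q}\bigl(\sM_*(\sF)\bigr).$$

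To evaluate $E_2$, I observe that $\widetilde{Y}$ is an orientable real $2d$-manifold (being a complex manifold of complex dimension $d$) which is contractible. Poincar\'e--Lefschetz duality for oriented $n$-manifolds, $\H^p_c(\widetilde{Y}, \K) \cong H_{2d-p}(\widetilde{Y}, \K)$, together with contractibility gives $\H^p_c(\widetilde{Y}, \K) = \K$ for $p = 2d$ and $0$ otherwise; with constant coefficients $V^q$ this yields $E_2^{p,q} = V^q$ for $p = 2d$ and vanishes for $p \ne 2d$. The spectral sequence therefore collapses at $E_2$, producing a canonical isomorphism $\H^{2d+q}(\sM_*(\sF)) \cong V^q$ for every $q$.

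The conclusion is then immediate: if $\sM_*(\sF) = 0$, then $V^q = 0$ for every $q$, so $\sH^q(\sF) = 0$ for every $q$, whence $\sF = 0$. There is no technically hard step here; the only points to check carefully are the exactness of $p^{\ast}$ (so that cohomology sheaves commute with pullback, giving a constant sheaf on $\widetilde{Y}$), and the Poincar\'e-duality computation on the non-compact manifold $\widetilde{Y}$, which relies essentially on the contractibility coming from asphericity of $Y$. Note in particular that projectivity of $Y$ is never used in this argument.
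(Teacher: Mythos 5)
Your proof is correct and it takes a genuinely different route from the paper's. The paper argues at the level of complexes: it invokes Proposition~\ref{prop_constant}~(5) to see that $\mathbb{D}(p^*\sF)$ is again locally constant, then uses Verdier duality in the form $R\widetilde{q}_!\cong \mathbb{D}\,R\widetilde{q}_*\,\mathbb{D}$ together with the contractibility of $\widetilde Y$ (via Proposition~\ref{prop_constant} applied to $U=\widetilde Y$) to identify $\sM_*(\sF)$ with the linear dual of the stalk $i_x^*\mathbb{D}(p^*\sF)$, so that vanishing of the Mellin transform forces all stalks of $\mathbb{D}(p^*\sF)$, and hence $\sF$ itself, to vanish. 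You instead argue at the level of cohomology sheaves: you feed $p^*\sF$ into the compactly-supported hypercohomology spectral sequence, use classical Poincar\'e--Lefschetz duality on the contractible oriented $2d$-manifold $\widetilde Y$ to show the $E_2$-page is concentrated in the single column $p=2d$, and conclude that $\H^{2d+q}(\sM_*(\sF))\cong V^q$ exactly, which is a slightly sharper intermediate output (and matches Example~\ref{ex9} when $\sF$ is a single shifted local system). Both proofs are really Verdier-duality arguments in disguise and both use field coefficients; the paper's version is more compact and recycles the Verdier-dual characterization of locally constant complexes established in Proposition~\ref{prop_constant}, whereas yours is more explicit and avoids that characterization entirely, needing only item~(3) of that proposition. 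Your closing remark that projectivity of $Y$ is not used also applies to the paper's argument.
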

\begin{proof}
The canonical filtration on $\sF$ induces a spectral sequence 
\[
E_2^{p, q}=H^p\big(\sM_!\big(\sH^q(\sF)\big)\big)\Rightarrow H^{p+q}\big(\sM_!(\sF)\big).
\]
By Example \ref{ex9}, the assumption that $\sH^p(\sF)$ are local systems for all $p$ implies that the spectral sequence degenerates at the second page, and 
\[
H^{k}\big(\sM_!(\sF)\big)\cong H^{2d}\big(\sM_!\big(\sH^{k-2d}(\sF)\big)\big)\cong V^{k-2d},
\]
where $V^{k-2d}$ is the stalk of $\sH^{k-2d}(\sF)$ at the base point. Therefore, $\sH^{k-2d}(\sF)=0$ for all $k$, that is, $\sF=0$. 
\end{proof}

\begin{prop}\label{prop_finite2}
Let $Y$ be an aspherical projective manifold of complex dimension $d$, and fix a field $\K$. The following statements are equivalent:
\begin{enumerate}
\item The Mellin transformation of any nonzero $\K$-constructible complex  is nonzero. 
\item For any constructible complex $\sF$ on $Y$, if $H^k(\sM_!(\sF))$ is finite dimensional for all $k$, then $\sF$ is locally constant. 
\end{enumerate}
\end{prop}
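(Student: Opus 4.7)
The idea is to exhibit any $\sF$ satisfying the finiteness hypothesis as an iterated mapping cone of shifted local systems, using Lemma \ref{lemma_morphism} to strip off the lowest Mellin cohomology and Lemma \ref{lemma_cone} to reassemble the result. The implication $(2)\Rightarrow(1)$ is essentially immediate: if $\sF$ is a $\K$-constructible complex with $\sM_*(\sF)=0$, then every $\H^k(\sM_*(\sF))$ vanishes and is trivially finite dimensional, so $(2)$ gives that $\sF$ is locally constant, and Lemma \ref{lemnew} then forces $\sF=0$.

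For the main implication $(1)\Rightarrow(2)$, assume each $\H^k(\sM_*(\sF))$ is finite dimensional. Since $Y$ is compact and $\sF$ is bounded constructible, $\sM_*(\sF)$ is bounded, so its cohomology is concentrated in a finite interval $[r_0,b]$. Lemma \ref{lemma_morphism} supplies a morphism $\phi_0\colon L^{r_0}[2d-r_0]\to\sF$ whose Mellin transform is an isomorphism on $\H^{r_0}$. Let $\sF_1$ be the mapping cone of $\phi_0$. Chasing the long exact sequence of Mellin cohomologies attached to the triangle $L^{r_0}[2d-r_0]\to\sF\to\sF_1\to$, and using that $\sM_*(L^{r_0}[2d-r_0])\cong V^{r_0}[-r_0]$ by Example \ref{ex9}, one checks that $\H^k(\sM_*(\sF_1))=0$ for $k\leq r_0$, that the remaining Mellin cohomologies of $\sF_1$ are still finite dimensional, and that the upper cohomological bound is still at most $b$. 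Iterating produces a finite sequence of distinguished triangles
\[
L^{r_i}[2d-r_i]\xrightarrow{\phi_i}\sF_i\to\sF_{i+1}\to
\]
with $r_0<r_1<\cdots\leq b$, and, after at most $b-r_0+1$ steps, $\sM_*(\sF_N)=0$. Hypothesis $(1)$ then forces $\sF_N=0$.

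Unwinding from the top, the final triangle collapses to $\sF_{N-1}\cong L^{r_{N-1}}[2d-r_{N-1}]$, a shifted local system, and hence locally constant by Proposition \ref{prop_constant}. Each earlier $\sF_i$ is then the mapping cone of a morphism between locally constant complexes, so Lemma \ref{lemma_cone} applied inductively yields that $\sF=\sF_0$ is locally constant. The main technical point will be verifying the two invariants of the iteration --- that the lowest Mellin-cohomology degree strictly increases at each step while the upper bound is preserved --- both of which should follow from a careful but routine inspection of the long exact sequence together with the key isomorphism property in Lemma \ref{lemma_morphism}.
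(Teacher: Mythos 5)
Your proof is correct and takes essentially the same approach as the paper: strip off the lowest Mellin cohomology via Lemma \ref{lemma_morphism}, use hypothesis (1) to terminate, and reassemble with Lemma \ref{lemma_cone}; the reverse implication via Lemma \ref{lemnew} matches as well. The only stylistic difference is that the paper organizes the descent as an induction on the total dimension $\sigma(\sF)=\sum_k\dim_\K \H^k(\sM_*(\sF))$ (which drops strictly at each step), whereas you run an explicit iteration and track termination by the strictly increasing lowest nonzero degree together with the preserved upper bound --- a slightly more bookkeeping-heavy but equivalent formulation.
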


\begin{proof}
The implication $(2)\Rightarrow (1)$ follows directly from Lemma \ref{lemnew}. 

To prove the implication $(1)\Rightarrow (2)$, we use induction on the total dimension 
\[
\sigma(\sF)\coloneqq \sum_{k\in \Z}\dim_\K \H^k\big(\sM_!(\sF)\big).
\]
If $\sigma(\sF)=0$, that is, $\sM_!(\sF)=0$, then statement (1) implies that $\sF=0$. Assume that $\sigma(\sF)>0$. Let $\H^r(\sM_!(\sF))$ be the nonzero cohomology group of the lowest degree. By Lemma \ref{lemma_morphism}, there exists a morphism $\phi: L^r[2d-r]\to \sF$ such that the mapping cone $C(\phi)$ satisfies
\[
\H^k\Big(\sM_!\big(C(\phi)\big)\Big)=\begin{cases}
0 &\mbox{if }k\leq r\\
\H^k(\sM(\sF)) &\mbox{if } k>r. 
\end{cases}
\]
Thus, $\sigma(C(\phi))<\sigma(\sF)$. By the inductive hypothesis, $C(\phi)$ is locally constant. By Lemma~\ref{lemma_cone} and the distinguished triangle
\[
L^r[2d-r]\to \sF\to C(\phi)\to L^r[2d-r+1],
\]
we know that $\sF$ is also locally constant. 
\end{proof}

We also have the following result, whose proof will be given later on.
\begin{prop}\label{prop_nonzero}
Let $A$ be an abelian variety, and let $\sF$ be a nonzero $\K$-constructible complex. Then the Mellin transformation $\sM_!(\sF)$ is nonzero. 
\end{prop}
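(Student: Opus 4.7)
I argue by induction on $g = \dim A$, the case $g = 0$ being trivial. Assume the proposition for all abelian varieties of dimension $< g$, and suppose, for contradiction, that $\sM_*(\sF) = 0$ for some nonzero $\K$-constructible complex $\sF$ on $A$. A standard reduction along the perverse $t$-structure --- using that $\sM_*$ is triangulated and that the category of $\K$-perverse sheaves on $A$ is Artinian, so admits Jordan--H\"older filtrations --- produces a nonzero simple $\K$-perverse sheaf $\sP$ with $\sM_*(\sP) = 0$. I work with this $\sP$ throughout.

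\textbf{Characteristic cycles and Euler characteristic.} By Corollary \ref{cor_nonzero}, $\chi(A, \sP) = 0$. Kashiwara's index theorem expresses this as
\[
\chi(A, \sP) = \langle CC(\sP), [Z_A] \rangle_{T^*A},
\]
where $Z_A \subset T^*A$ is the zero section. Since $A$ is a complex Lie group, its cotangent bundle is canonically trivial, $T^*A \cong A \times V^*$ with $V = \mathrm{Lie}(A)$, and under this trivialization the right-hand side is the degree at $0 \in V^*$ of the pushforward $(\pi_{V^*})_* CC(\sP)$. Because $\sP$ is perverse, $CC(\sP) = \sum_Z n_Z T^*_Z A$ is \emph{effective}, so this intersection number is non-negative, and its vanishing forces every component $T^*_Z A$ of $CC(\sP)$ to project to a proper subvariety of $V^*$. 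A standard geometric fact about abelian varieties says that $T^*_Z A \to V^*$ fails to dominate precisely when $Z \subseteq A$ has positive-dimensional stabilizer under translation. Let $B \subseteq A$ be the identity component of the common stabilizer of the supports of all components of $CC(\sP)$; then $B$ is a positive-dimensional abelian subvariety, and both $\mathrm{supp}(\sP)$ and $CC(\sP)$ are $B$-invariant.

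\textbf{Descent and induction.} I then use the simplicity of $\sP$ together with $B$-invariance of $CC(\sP)$ to promote these to $B$-equivariance of $\sP$ itself: for every $b \in B$, $t_b^*\sP \cong \sP$, compatibly in $b$. Consequently $\sP$ descends along the quotient morphism $q: A \to A/B$ to a simple perverse sheaf $\sP'$ on $A/B$ with $\sP \cong q^*\sP'[\dim B]$. A projection-formula computation on the universal covers, together with the short exact sequence $\pi_1(B) \hookrightarrow \pi_1(A) \twoheadrightarrow \pi_1(A/B)$ (noting $R(qp)_!\K_{\widetilde A}$ identifies with the pullback of the universal $\K[\pi_1(A/B)]$-local system shifted by $-2\dim B$), yields
\[
\sM_*\bigl(A,\, q^*\sP'[\dim B]\bigr) \;\cong\; \sM_*(A/B, \sP')[-\dim B]
\]
as complexes of $\K[\pi_1(A)]$-modules, the $\K[\pi_1(A)]$-action on the right factoring through the quotient. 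Hence the left-hand side vanishes iff $\sM_*(A/B, \sP')$ does. By the inductive hypothesis applied to $A/B$ (of dimension $< g$), $\sM_*(A/B, \sP') \neq 0$, contradicting $\sM_*(A, \sP) = 0$.

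\textbf{Main obstacle.} The principal technical difficulty is the descent step in the third paragraph: promoting $B$-invariance of the characteristic cycle (a topological statement in $T^*A$) to $B$-equivariance of the simple perverse sheaf $\sP$ itself, so that $\sP$ can be pulled back from a perverse sheaf on $A/B$. In characteristic zero this is essentially Weissauer's structure theorem for simple perverse sheaves of zero Euler characteristic on abelian varieties, ordinarily proved via the Fourier--Mukai equivalence on $\mathcal{D}$-modules; since Fourier--Mukai is unavailable for $\K$ of positive characteristic, one must argue via characteristic cycles directly. The crucial observation is that these cycles are defined topologically on $T^*A$ and are insensitive to the choice of coefficient field, so the positivity and translation-invariance inputs remain valid for any $\K$; combined with the rigidity built into simplicity of $\sP$, this is what permits the descent to go through uniformly in $\mathrm{char}(\K)$.
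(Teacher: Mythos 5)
Your proposal is not the paper's argument, and it contains two gaps that I do not see how to close with the tools you invoke.

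\textbf{First gap: the Jordan--H\"older reduction.} You assert that from a nonzero $\sF$ with $\sM_*(\sF)=0$ one can extract a simple perverse sheaf $\sP$ with $\sM_*(\sP)=0$, citing only triangulatedness of $\sM_*$ and Artinianness of $\Perv(A,\K)$. This does not follow: in the perverse truncation (or Jordan--H\"older) filtration, the long exact sequences obtained by applying $\sM_*$ may exhibit cancellation, so $\sM_*(\sF)=0$ gives no control over $\sM_*$ of the individual graded pieces. To make any such reduction work one needs some degeneration statement; in the paper this is supplied by the Bhatt--Schnell--Scholze generic vanishing theorem (Theorem~\ref{thm_BSS}), which shows that for generic rank-one twists the perverse spectral sequence degenerates at $E_2$. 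Your argument never invokes generic vanishing, so the reduction step is unjustified.

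\textbf{Second gap: the descent.} This is the one you flag yourself, and it is fatal as written. You pass from $B$-invariance of the cycle $CC(\sP)$ (and of $\Supp\sP$) to $B$-equivariance of the simple perverse sheaf $\sP$ itself, and thence to a descent $\sP\cong q^*\sP'[\dim B]$. Invariance of the support under translation does \emph{not} imply translation-equivariance of the sheaf: a skyscraper-free local system supported on a $B$-coset is the simplest counterexample. In characteristic zero the implication you want is precisely Weissauer's structure theorem for degenerate simple perverse sheaves, proved via Fourier--Mukai / $\mathcal D$-modules, and you concede that this tool is unavailable over a general $\K$. Your suggested replacement, ``argue via characteristic cycles directly, using the rigidity built into simplicity,'' is an assertion, not a proof; nothing in the proposal actually derives equivariance from the cycle-level data. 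There is also a secondary issue here: even granting that each component $T^*_Z A$ of $CC(\sP)$ has $Z$ stabilized by a positive-dimensional subtorus, the \emph{common} stabilizer of all components need not be positive dimensional, so the definition of $B$ as stated does not obviously produce a positive-dimensional subvariety.

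For comparison, the paper never attempts this descent. Its structure is: (a) for a \emph{simple} abelian variety $A$, every proper irreducible $Z\subset A$ has dominant conormal projection to $H^0(A,\Omega^1_A)$ (Lemma~\ref{lemma_curve} and Proposition~\ref{prop_nonempty}), hence $\chi(A,\sP)>0$ for any non-locally-constant perverse sheaf (Corollary~\ref{cor_simple}); combined with generic vanishing and Corollary~\ref{cor_allzero}, this settles the simple case without any Jordan--H\"older reduction. (b) For non-simple $A$, the paper chooses an exact sequence $0\to A_1\to A\to A_2\to 0$, twists by rank-one local systems pulled back from $A_1$ and $A_2$, and runs a Fubini-type induction: first on $A_2$ to kill $Rp_{2*}(\sF\otimes p_1^*L_1)$, then fiberwise on $p_2^{-1}(x)\cong A_1$. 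This replaces your descent entirely and makes no appeal to equivariance of $\sP$ or to the structure of degenerate perverse sheaves.

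Your computation $\sM_*(A,q^*\sP'[\dim B])\cong\sM_*(A/B,\sP')[-\dim B]$ is correct as a formula, but it only enters after the two unproved steps above, so the proof as a whole does not stand.
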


Combining the above two propositions, we have the following corollary. 
\begin{cor}\label{c26}
Let $A$ be an abelian variety, and let $\sF$ be a $\K$-constructible complex. If $\sM_!(\sF)$ has finite dimensional cohomology in every degree, then $\sF$ is locally constant. 
\end{cor}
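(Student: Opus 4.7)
The plan is to observe that the corollary is essentially a direct composition of the two preceding propositions, so the main task is to check that their hypotheses match. First I would record that an abelian variety $A$ of complex dimension $g$ is an aspherical projective manifold: its universal cover is $\C^g$, which is contractible, so $A$ is a $K(\pi,1)$ in the sense required by Proposition \ref{prop_finite2}. Thus Proposition \ref{prop_finite2} applies with $Y=A$ and any field $\K$.

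Next, I would invoke Proposition \ref{prop_nonzero}, which asserts precisely that condition (1) of Proposition \ref{prop_finite2} holds when $Y=A$: the Mellin transformation $\sM_*(\sF)$ of any nonzero $\K$-constructible complex on $A$ is nonzero. Consequently, the equivalent condition (2) of Proposition \ref{prop_finite2} also holds, and this condition reads verbatim as the statement of the corollary: whenever $\H^k(\sM_*(\sF))$ is finite dimensional for every $k$, the complex $\sF$ must be locally constant.

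In summary, the proof is simply the two-step chain
\[
\text{Prop.\ \ref{prop_nonzero}} \;\Longrightarrow\; \text{condition (1) of Prop.\ \ref{prop_finite2} for } A \;\Longrightarrow\; \text{condition (2)} \;=\; \text{Cor.\ \ref{c26}},
\]
where the only substantive input already supplied earlier in the paper is the asphericity of $A$. There is no real obstacle here: all of the work has been front-loaded into Proposition \ref{prop_nonzero} (still to be proved, via characteristic cycles and the geometry of abelian varieties) and into the inductive argument of Proposition \ref{prop_finite2}. The corollary itself is a formal consequence, and I would present it as a two-line deduction with no further calculation.
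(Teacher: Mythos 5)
Your proof is exactly the argument the paper intends: Proposition \ref{prop_nonzero} supplies condition (1) of Proposition \ref{prop_finite2} for $Y=A$ (using that $A$ is an aspherical projective manifold), and condition (2) is literally the statement of the corollary. The paper simply says ``Combining the above two propositions,'' so your deduction matches it.
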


\begin{lemma}\label{lemma_algebra}
Let $f: M^\bullet\to N^\bullet$ be a homomorphism of bounded complexes of free $\Z$-modules, with finitely generated cohomology. If $f\otimes_\Z \K: M^\bullet\otimes_\Z \K\to N^\bullet\otimes_\Z \K$ is a quasi-isomorphism for any field $\K$, then $f$ is a quasi-isomorphism. 
\end{lemma}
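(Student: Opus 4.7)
The plan is to reduce to showing the mapping cone is acyclic, then extract the vanishing of integral cohomology from its vanishing over each $\bF_p$ via the universal coefficient theorem.

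First I would form the mapping cone $C^\bullet := C(f)$. Since $M^\bullet$ and $N^\bullet$ are bounded complexes of free $\Z$-modules, so is $C^\bullet$, and by the long exact sequence associated with the distinguished triangle $M^\bullet \to N^\bullet \to C^\bullet \to M^\bullet[1]$ the cohomology groups $H^n(C^\bullet)$ are finitely generated $\Z$-modules. The statement that $f$ is a quasi-isomorphism is equivalent to $C^\bullet$ being acyclic. Tensoring a mapping cone with $\K$ commutes with taking the cone (no flatness issues arise because every term is free), so $f\otimes_\Z \K$ is a quasi-isomorphism if and only if $C^\bullet \otimes_\Z \K$ is acyclic. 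Thus the hypothesis gives that $C^\bullet \otimes_\Z \K$ is acyclic for every field $\K$, and I need to promote this to acyclicity of $C^\bullet$ itself.

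Next, for each prime $p$, I would invoke the universal coefficient theorem for the bounded complex of free $\Z$-modules $C^\bullet$ with coefficients in $\bF_p$:
\[
0 \to H^n(C^\bullet)\otimes_\Z \bF_p \to H^n(C^\bullet \otimes_\Z \bF_p) \to \mathrm{Tor}^\Z_1\!\big(H^{n+1}(C^\bullet),\bF_p\big) \to 0.
\]
The assumption that $C^\bullet \otimes_\Z \bF_p$ is acyclic forces $H^n(C^\bullet)\otimes_\Z \bF_p = 0$ for every integer $n$ and every prime $p$.

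Finally, I would apply the structure theorem for finitely generated abelian groups: write $H^n(C^\bullet) \cong \Z^r \oplus T$ with $T$ a finite torsion group, say $T = \bigoplus_{p} T_{(p)}$ decomposed into its $p$-primary components. Then $H^n(C^\bullet)\otimes_\Z \bF_p \cong \bF_p^{\,r} \oplus (T_{(p)}/p T_{(p)})$, and the vanishing of this group for every prime $p$ forces $r=0$ and $T_{(p)} = 0$ for every $p$ by Nakayama. Hence $H^n(C^\bullet)=0$ for all $n$, so $C^\bullet$ is acyclic and $f$ is a quasi-isomorphism. There is no real obstacle here; the lemma is standard homological algebra and the only point to keep an eye on is ensuring that the mapping cone inherits both freeness (so that UCT applies) and finite generation of its cohomology (so that the structure theorem finishes the argument).
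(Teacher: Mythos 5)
Your proof is correct and takes essentially the same route as the paper: both pass to the mapping cone, apply the universal coefficient theorem, and use finite generation of the cohomology to conclude. The paper argues contrapositively (choosing $\K=\Q$ or $\bF_p$ so that a putative nonzero $H^k(C(f))$ survives tensoring), while you argue directly with all $\bF_p$ and finish with the structure theorem; these are two phrasings of the same observation.
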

\begin{proof}
Consider the mapping cone $C(f)$ of $f$. By assumption, $C(f)\otimes_\Z \K$ is acyclic for any field $\K$. Suppose that $C(f)$ is not acyclic. Let $\H^k(C(f))$ be a nonzero cohomology group. Since $\H^k(C(f))$ is a finitely generated abelian group, we can choose $\K$ to be either $\Q$ or a finite field such that $\H^k(C(f))\otimes_\Z \K\neq 0$. By the universal coefficient theorem, we have a short exact sequence
\[
0\to \H^k(C(f))\otimes_\Z \K\to \H^k(C(f)\otimes_\Z \K)\to \mathrm{Tor}\big(\H^{k+1}(C(f)), \K\big)\to 0.
\]
This contradicts the fact that  $C(f)\otimes_\Z \K$ is acyclic. Hence $C(f)$ must be acyclic, that is, $f$ is a quasi-isomorphism. 
\end{proof}

\begin{lemma}\label{lemma_KZ}
Let $\sF$ be a $\Z$-constructible complex on a complex manifold $M$. If $\sF \Lotimes_\Z \K$ is  locally constant for any field $\K$, then $\sF$ is locally constant. 
\end{lemma}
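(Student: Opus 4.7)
The strategy is to verify condition (2) of Proposition \ref{prop_constant} for $\sF$ by reducing the required quasi-isomorphism to field coefficients via Lemma \ref{lemma_algebra}, and then invoking the hypothesis on $\sF\Lotimes_\Z\K$. Fix a point $x\in M$ and choose a contractible open neighborhood $U$ of $x$ small enough that $U$ meets only finitely many strata of a Whitney stratification of $M$ compatible with $\sF$. Since $\sF$ is bounded $\Z$-constructible, after replacing it by a quasi-isomorphic bounded-above complex of flat $\Z$-sheaves I can arrange that $R\Gamma(U,\sF)$ and $i_x^*\sF$ are each represented by bounded complexes of finitely generated free $\Z$-modules, and that the base-change identities
\[
R\Gamma(U,\sF)\Lotimes_\Z \K \;\cong\; R\Gamma(U,\sF\Lotimes_\Z \K), \qquad i_x^*\sF\Lotimes_\Z \K \;\cong\; i_x^*(\sF\Lotimes_\Z \K)
\]
hold for every field $\K$. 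These follow from constructibility of $\sF$: on such a ``nice'' $U$, derived global sections are computed by a finite combinatorial model adapted to the stratification, and hence commute with derived tensor products.

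With this setup in hand, I apply Lemma \ref{lemma_algebra} to the restriction map $\varphi\colon R\Gamma(U,\sF)\to i_x^*\sF$: it suffices to show that $\varphi\Lotimes_\Z\K$ is a quasi-isomorphism for every field $\K$. Using the base-change identities above, $\varphi\Lotimes_\Z\K$ is canonically identified with the restriction map
\[
R\Gamma(U,\sF\Lotimes_\Z\K)\;\longrightarrow\; i_x^*(\sF\Lotimes_\Z\K),
\]
which is a quasi-isomorphism because $\sF\Lotimes_\Z\K$ is locally constant by hypothesis and $U$ is contractible. Hence $\varphi$ itself is a quasi-isomorphism, and covering $M$ by such neighborhoods $U$ and applying Proposition \ref{prop_constant}(2) shows that $\sF$ is locally constant.

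The main obstacle is establishing the base-change identity $R\Gamma(U,\sF)\Lotimes_\Z\K\cong R\Gamma(U,\sF\Lotimes_\Z\K)$, which fails in general since derived global sections do not commute with arbitrary derived tensor products (the issue arising for $\K=\F_p$, when $\K$ is not flat over $\Z$). The constructibility of $\sF$ together with the careful choice of $U$ as a regular neighborhood adapted to the stratification reduces the left-hand side to a bounded complex of finitely generated free $\Z$-modules, on which derived tensor product with any field is well-behaved and commutes with the corresponding computation on the sheaf side.
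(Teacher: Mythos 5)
Your overall strategy is exactly the paper's: fix a contractible $U$ with finitely generated $R\Gamma(U,\sF)$, establish the base-change identities $R\Gamma(U,\sF)\Lotimes_\Z\K\cong R\Gamma(U,\sF\Lotimes_\Z\K)$ and $i_x^*\sF\Lotimes_\Z\K\cong i_x^*(\sF\Lotimes_\Z\K)$, feed the restriction map into Lemma~\ref{lemma_algebra}, and invoke Proposition~\ref{prop_constant}(2). You also correctly flag that the base-change identity for $R\Gamma$ is the crux.

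The gap is that you never actually prove that identity. Your justification has two problems. First, ``replacing $\sF$ by a quasi-isomorphic bounded-above complex of flat $\Z$-sheaves'' does not help compute $R\Gamma(U,-)$: flat resolutions are the tool for $\Lotimes_\Z$, while $R\Gamma$ needs an injective (or at least $\Gamma$-acyclic) resolution, so the framing is backwards. Second, the phrase ``derived global sections are computed by a finite combinatorial model adapted to the stratification, and hence commute with derived tensor products'' asserts precisely the thing to be proved without supplying an argument; choosing $U$ to meet finitely many strata gives finite generation of $H^*(U,\sF)$, but finite generation of the output does not by itself imply that the functor $R\Gamma(U,-)$ commutes with $\Lotimes_\Z\K$. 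The paper closes this gap concretely: take an injective resolution $\sI^\bullet$ of $\sF$ and a (two-term) free resolution $P^\bullet$ of $\K$ over $\Z$; the total complex of $\sI^\bullet\otimes_\Z P^\bullet$ is a complex of injective sheaves representing $\sF\Lotimes_\Z\K$, and taking sections over $U$ of this total complex computes both $R\Gamma(U,\sF)\Lotimes_\Z\K$ and $R\Gamma(U,\sF\Lotimes_\Z\K)$ at once; the stalk-wise identity then follows because filtered colimits commute with tensor products. You should either reproduce an argument of this kind or replace it with an explicit dévissage (the case $\K=\F_p$ via the triangle $\Z\xrightarrow{p}\Z\to\F_p$, and the case $\K=\Q$ via a filtered-colimit argument using finite cohomological dimension of $R\Gamma(U,-)$ on constructible complexes). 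As written, the decisive step is asserted rather than proved.
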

\begin{proof}
First, we can take an open cover $\{U_\lambda\}_{\lambda\in I}$ such that each $U_\lambda$ is contractible and the cohomology groups of $R\Gamma(U_\lambda, \sF)$ are finitely generated. The second condition can be achieved by choosing $U_\lambda$ as sufficiently small balls. 

We claim that, for any $U_\lambda$, there exists a canonical isomorphism
\[
R\Gamma(U_\lambda, \sF)\Lotimes_\Z \K\cong R\Gamma(U_\lambda, \sF\Lotimes_\Z \K)
\]
in the derived category $D(\K)$ of complexes of $\K$-vector spaces. In fact, taking an injective resolution $\sI^\bullet$ of $\sF$ and a free resolution $P^\bullet$ of $\K$, the total complex of $\sI^\bullet\otimes_\Z P^\bullet$ is a complex of injective sheaves representing $\sF\Lotimes_\Z \K$, and hence both sides are isomorphic to the total complex in $D(\K)$. Since taking direct limit commutes with taking tensor product, using the same resolutions, we also have a canonical isomorphism
\[
  i_x^*\sF\Lotimes_\Z \K\cong  i_x^*(\sF\Lotimes_\Z \K)
\]
in $D(\K)$ for any point $x\in M$. 

Since $\sF\Lotimes_\Z \K$ is locally constant, the restriction map
\[
R\Gamma(U_\lambda, \sF\Lotimes_\Z \K)\to i_x^*(\sF\Lotimes_\Z \K)
\]
is an isomorphism. Combining the above three displayed equations, it follows that for any field $\K$, the restriction map $R\Gamma(U_\lambda, \sF)\to i_x^*\sF$ induces isomorphisms
\[
R\Gamma(U_\lambda, \sF)\Lotimes_\Z \K \to i_x^*\sF\Lotimes_\Z \K. 
\]
Since both $R\Gamma(U_\lambda, \sF)$ and $i_x^*\sF$ are bounded complexes with finitely generated cohomology groups, it follows from Lemma \ref{lemma_algebra} that $R\Gamma(U_\lambda, \sF)\to i_x^*\sF$  is an isomorphism. 
\end{proof}

\begin{cor}\label{cor_integral}
Let $A$ be an abelian variety, and let $\sF$ be a $\Z$-constructible complex. If the cohomology group of $\sM_!(\sF)$ in every degree is a finitely generated abelian group, then $\sF$ is locally constant.
\end{cor}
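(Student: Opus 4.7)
The natural strategy is to reduce the integral statement to the field case (Corollary \ref{c26}) by base change, and then invoke Lemma \ref{lemma_KZ} to patch the field-theoretic conclusions back into an integral one. So the plan is: for each field $\K$, show that $\sF\Lotimes_\Z\K$ satisfies the hypothesis of Corollary \ref{c26}, conclude that $\sF\Lotimes_\Z\K$ is locally constant on $A$ for every $\K$, and then apply Lemma \ref{lemma_KZ}.

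\textbf{Main steps.} First, I would establish the base-change compatibility
\[
\sM_*(\sF\Lotimes_\Z\K)\;\cong\;\sM_*(\sF)\Lotimes_\Z\K
\]
in $D^b(\K[G])$. This follows by unwinding Definition \ref{def6}: since $p^*$ commutes with derived tensor by a constant coefficient ring, $p^*(\sF\Lotimes_\Z\K)\cong p^*\sF\Lotimes_\Z\K_{\widetilde{A}}$, and then the projection formula for $R\widetilde{q}_!$ gives
\[
R\widetilde{q}_!\bigl(p^*\sF\Lotimes_\Z\widetilde{q}^*\K\bigr)\;\cong\;R\widetilde{q}_!(p^*\sF)\Lotimes_\Z\K\;=\;\sM_*(\sF)\Lotimes_\Z\K.
\]
Second, I would apply the universal coefficient theorem: for each $k$ there is a short exact sequence
\[
0\to \H^k(\sM_*(\sF))\otimes_\Z\K\to \H^k(\sM_*(\sF)\Lotimes_\Z\K)\to \mathrm{Tor}^\Z_1\bigl(\H^{k+1}(\sM_*(\sF)),\K\bigr)\to 0.
\]
The hypothesis that each $\H^k(\sM_*(\sF))$ is a finitely generated abelian group forces both outer terms to be finite-dimensional $\K$-vector spaces, hence so is $\H^k(\sM_*(\sF\Lotimes_\Z\K))$.

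\textbf{Conclusion.} Combining these two observations, for every field $\K$ the complex $\sF\Lotimes_\Z\K$ is a $\K$-constructible complex on $A$ with finite-dimensional Mellin cohomology in every degree. Corollary \ref{c26} then implies that $\sF\Lotimes_\Z\K$ is locally constant on $A$ for every field $\K$. Finally, Lemma \ref{lemma_KZ} promotes this to the statement that $\sF$ itself is locally constant, as desired.

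\textbf{Main obstacle.} There is no serious obstacle here: the argument is essentially a formal assembly of the previously established results. The only point requiring a bit of care is the base-change identity $\sM_*(\sF\Lotimes_\Z\K)\cong\sM_*(\sF)\Lotimes_\Z\K$, which must be set up using the projection formula for $R\widetilde{q}_!$ applied to the constant (pulled-back) ring $\K$; once this is in hand, the universal coefficient estimate and the two cited results finish the proof.
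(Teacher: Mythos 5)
Your proposal is correct and follows the same route as the paper: establish the base-change identity $\sM_*(\sF\Lotimes_\Z\K)\cong\sM_*(\sF)\Lotimes_\Z\K$, use the universal coefficient theorem to transfer finite generation over $\Z$ to finite dimensionality over each field $\K$, invoke Corollary \ref{c26}, and finish with Lemma \ref{lemma_KZ}. The paper's proof is a condensed version of exactly this argument, leaving the projection-formula and universal-coefficients steps implicit.
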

\begin{proof}
Since $\sM_!(\sF\Lotimes_\Z \K) \cong \sM_!(\sF) \Lotimes_\Z \K$, our assumption implies that $\sM_!(\sF\Lotimes_\Z \K)$ has finite dimensional cohomology groups.  So $\sF\Lotimes_\Z \K$ are locally constant by Corollary \ref{c26}. Now the assertion follows from Lemma \ref{lemma_KZ}. 
\end{proof}

The rest of this section is devoted to proving Proposition \ref{prop_nonzero} and to answer the integral homology version of the Bobadilla-Koll\'ar question for abelian varieties. To this end, we will use induction to reduce to the case when $A$ is a simple abelian variety. We first need the following result about simple abelian varieties.

\begin{prop}\label{cor_simple}
Let $A$ be a simple abelian variety. Let $\sP$ be a $\K$-perverse sheaf that is not locally constant. Then $\chi(A,\sP)>0$. 
\end{prop}
\begin{proof}
This is a direct consequence of \cite[Proposition 10.1]{KW}. Note that even though the result in loc. cit. is stated with complex coefficients, it works over any field, since the proof only uses characteristic cycles. 
\end{proof}

We also recall a generic  vanishing theorem of Bhatt-Schnell-Scholze. (See also \cite{Kra, LMWb, LMWd} for a generalization to semi-abelian varieties.) 
\begin{thm}[\cite{BSS}]\label{thm_BSS}
Let $\sP$ be a $\K$-perverse sheaf on an abelian variety $A$. Let $\overline{\K}$ be the algebraic closure of $\K$. For a general rank one $\overline{\K}$-local system $L$ on $A$, one has
\[
\H^i(A, \sP\otimes_\K L)=0\quad \textrm{for all $i\neq 0$. }
\]
\end{thm}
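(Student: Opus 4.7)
The plan is to combine the Mellin transformation formalism of Section~\ref{mel} with a codimension-of-support estimate for the cohomology modules of $\sM_*(\sP)$. Writing $G = \pi_1(A) \cong \Z^{2g}$, a rank-one $\overline{\K}$-local system $L$ on $A$ corresponds to a closed point $\chi \in \mathrm{Spec}(\overline{\K}[G])$, and Proposition~\ref{prop_iso1} gives the identification $\H^i(A, \sP \otimes_{\K} L) \cong \H^i\big(\sM_*(\sP) \otimes^L_{\K[G]} \overline{\K}_\chi\big)$ (using compactness of $A$ to identify ordinary and compactly supported cohomology). Thus the theorem is equivalent to showing that, on a dense Zariski open subset of $\mathrm{Spec}(\overline{\K}[G])$, the derived fiber of $\sM_*(\sP)$ is concentrated in cohomological degree zero.

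The main step I would try to establish is a \emph{generic concentration} property of $\sM_*(\sP)$: after base change to $\overline{\K}[G]$, every cohomology module $\H^i(\sM_*(\sP))$ with $i \neq 0$ is supported on a proper Zariski closed subset of the character variety. Because $\overline{\K}[G]$ is a regular Noetherian Laurent polynomial ring in $2g$ variables, this support condition ensures that at a generic maximal ideal $\chi$ all modules $\H^i(\sM_*(\sP))$ with $i \neq 0$ vanish, so the hyperext spectral sequence computing the derived fiber collapses to give $\H^i(\sM_*(\sP) \otimes^L_{\K[G]} \overline{\K}_\chi) = 0$ for $i \neq 0$. To prove the support statement I would d\'evissage $\sP$ to its simple Jordan--H\"older factors, reducing to the case of a simple perverse sheaf $\sP = j_{!*}(L_U[\dim Z])$ supported on an irreducible subvariety $Z \subset A$. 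The geometric input, in the spirit of Kr\"amer--Weissauer, is to analyze the translation stabilizer $A_0 = \{a \in A : t_a^* \sP \cong \sP\}$: if $\sP$ is not a shifted local system on $A$, then $A_0$ has positive codimension, and translation-equivariance of $\sP$ along $A_0$ propagates to an equivariance of $\sM_*(\sP)$ along the positive-dimensional complementary subtorus of the character variety, which in turn forces the higher cohomology modules to have positive-codimension support.

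The main obstacle will be the case of positive characteristic for $\K$, which is required for the integral application in Section~\ref{sec5}. In characteristic zero one can bypass much of the above by invoking the Laumon--Rothstein Fourier--Mukai equivalence together with Riemann--Hilbert to convert the problem into a statement about coherent sheaves on $\hat A$, as is already used in the proof of Theorem~\ref{th23}. In positive characteristic no Riemann--Hilbert correspondence is available, so one must either work directly with the \'etale / $\ell$-adic Fourier--Mukai transform (which does not exchange perverse sheaves with concentrated objects quite as cleanly) or invoke the prismatic/perfectoid machinery of Bhatt--Schnell--Scholze to treat all characteristics on equal footing; this latter input is what I would expect to be the hardest step.
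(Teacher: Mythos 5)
This statement is not proved in the paper; it is quoted verbatim as a known theorem of Bhatt--Schnell--Scholze and used as a black box in the proof of Proposition~\ref{prop_nonzero}. So there is no ``paper's own proof'' to compare against. Your proposal lays out a reasonable framework: identifying $\H^i(A,\sP\otimes_\K L)$ with the derived fiber of $\sM_*(\sP)$ via Proposition~\ref{prop_iso1}, observing that over the regular Noetherian ring $\overline{\K}[G]$ the finitely generated cohomology modules are generically free, and thereby reducing the claim to the statement that $\H^i(\sM_*(\sP))$ has positive-codimension support for $i\neq 0$. That reduction is correct.

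However, the reduction is not a proof --- the generic-codimension claim \emph{is} the generic vanishing theorem, and your final paragraph concedes that the hard step (especially over fields of positive characteristic, which is exactly the case needed in Section~\ref{sec5}) is supplied by the prismatic/perfectoid argument of \cite{BSS}. In effect the proposal reduces the theorem to itself. The Kr\"amer--Weissauer translation-stabilizer d\'evissage you invoke is genuinely the mechanism behind the characteristic-zero case, and the Laumon--Rothstein Fourier--Mukai route is a real alternative there; but neither gives the statement over $\overline{\F}_p$, which is the whole point of citing \cite{BSS} rather than the earlier literature. A review note: in the paper this theorem is recalled without proof and should be treated as an external input; an actual proof would need to reproduce the Bhatt--Schnell--Scholze argument (reduction to characteristic $p$, arc-descent for the Riemann--Hilbert functor, etc.), none of which appears in your sketch.
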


We now have all the ingredients to complete the proof of Proposition \ref{prop_nonzero}.
\begin{proof}[Proof of Proposition \ref{prop_nonzero}] 
Let $\sF$ be a $\K$-constructible complex on an abelian variety $A$ such that $\sM_!(A,\sF)=0$. We will show that $\sF=0$. 

If $\sF$ is locally constant, then Lemma \ref{lemnew} implies that $\sF=0$.

Let us next assume that $A$ is a simple abelian variety and $\sF$ is not locally constant. Then by Proposition \ref{prop_constant}, there exists some perverse cohomology $^p\sH^l(\sF)$ that is not locally constant. By Corolllary \ref{cor_simple}, we have $\chi(A,{^p\sH}^l(\sF))>0$. Let $L$ be a general rank one $\overline{\K}$-local system on $A$. Then by Theorem \ref{thm_BSS}, we have
\[
\H^j\big(A, \,^p\sH^i(\sF)\otimes_\K L\big)=0\quad \textrm{for all $i\in \Z$ and $j\neq 0$. }
\]
Since $\chi(A,{^p\sH}^l(\sF)\otimes_\K L)=\chi(A,{^p\sH}^l(\sF))>0$, we have that 
$\H^0(A, \,^p\sH^l(\sF)\otimes_\K L)\neq 0$. Since
\[
^p\sH^i(\sF)\otimes_\K L\cong\, ^p\sH^i(\sF \otimes_\K L),
\]
the perverse cohomology spectral sequence
\[
E^{ij}_2=\H^i\big(A, \,^p\sH^j(\sF\otimes_\K L)\big)\Longrightarrow \H^{i+j}(A, \sF\otimes_\K L)
\]
degenerates at the $E_2$-page. Therefore, $\H^l(A, \sF\otimes_\K L)\neq 0$. By Corollary \ref{cor_allzero}, we have that $\sM_!(A,\sF)\neq 0$, a contradiction to the assumption. So this case cannot occur. 

So far, we have proved the proposition when $A$ is a simple abelian variety. 
Finally, we prove the general case using induction on the dimension of $A$. Since we are done with the case when $A$ is simple, from now on we can assume that $A$ is not simple. In this case, there exists a short exact sequence of positive dimensional abelian varieties
\[
0\to A_1\to A\xrightarrow{p_2} A_2\to 0.
\]
In general, the short exact sequence does not split in the category of abelian varieties, but it does split in the category of real Lie groups. Fix such a splitting, and denote the induced projection $A\to A_1$ by $p_1$. 

Since $\sM_!(A,\sF)=0$, we get by Corollary \ref{cor_allzero} that 
\[
H^i\big(A, \sF\otimes_\K p_1^*L_1\otimes_{\overline{\K}} p_2^*L_2\big)=0
\]
for and $i\in \Z$ and any rank one $\overline{\K}$-local systems $L_1$ and $L_2$ on $A_1$ and $A_2$, respectively. Moreover, by the projection formula, we have
\[
0=H^i\big(A, \sF\otimes_\K p_1^*L_1\otimes_{\overline{\K}}  p_2^*L_2\big)\cong H^i\big(A_{2}, Rp_{2*}(\sF\otimes_\K p_1^*L_1)\otimes_{\overline{\K}}  L_2\big).
\]
By Corollary \ref{cor_allzero}, we have
\[
\sM_!(A_2, Rp_{2*}\big(\sF\otimes_\K  p_1^*L_1)\big)=0.
\]
By the induction hypothesis, the proposition holds for $A_2$. So we have that 
\[
Rp_{2*}(\sF\otimes_\K p_1^*L_1)=0\quad \textrm{for any rank one $\overline{\K}$-local system $L_1$ on $A_1$.}
\]
Choose any point $x\in A_2$. By the base change formula, we have
\[
0 =i_x^*Rp_{2*}(\sF\otimes_\K p_1^*L_1)\cong Rp_{2*}\Big((\sF\otimes_\K p_1^*L_1)|_{p_2^{-1}(x)}\Big) \cong {R}p_{2*}\Big(\sF|_{p_2^{-1}(x)}\otimes_\K p_1^*L_1|_{p_2^{-1}(x)}\Big), 
\]
or, equivalently,
\[
H^i\Big(p_2^{-1}(x), \sF|_{p_2^{-1}(x)}\otimes_\K p_1^*L_1|_{p_2^{-1}(x)}\Big)=0 
\]
for any $i\in \Z$ and any rank one $\overline{\K}$-local system $L_1$ on $A_1$.

Notice that $p_2^{-1}(x)$ is isomorphic to $A_1$ and as $L_1$ varies through all rank one $\overline{\K}$-local systems on $A_1$, $p_1^*L_1|_{p_2^{-1}(x)}$ varies through all rank one $\overline{\K}$-local systems on $p_2^{-1}(x)$. Thus, by Corollary \ref{cor_allzero}, we have
\[
\sM_!\Big(p_2^{-1}(x), \sF|_{p_2^{-1}(x)}\Big)=0.
\]
Again, by the induction hypothesis, this implies that $\sF|_{p_2^{-1}(x)}=0$. Since $x$ is an arbitrary point on $A_2$, we  conclude that $\sF=0$. 
\end{proof}

We can now prove the integral Bobadilla-Koll\'ar question for abelian varieties.
\begin{thm}\label{th38}
Let $f : X \to Y$ be a morphism from a smooth projective variety to an abelian variety.
Let $\widetilde{Y}$ be the universal cover of $Y$, and assume that $\widetilde{X}\coloneqq X\times_{Y} \widetilde{Y}$  is homotopy equivalent to a finite CW-complex. Then $f$ is a $\Z$-homology fiber bundle.
\end{thm}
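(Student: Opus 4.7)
The plan is to reduce the statement to Corollary \ref{cor_integral}, applied to the $\Z$-constructible complex $\sF \coloneqq Rf_*\Z_X$ on the abelian variety $Y$. Once $\sF$ is shown to be locally constant, Proposition \ref{prop_constant} guarantees that each $R^i f_*\Z_X \cong \sH^i(\sF)$ is a $\Z$-local system on $Y$, which is precisely the condition for $f$ to be a $\Z$-homology fiber bundle (as recorded in Section \ref{locc}).

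To invoke Corollary \ref{cor_integral}, I need to verify that every cohomology group of $\sM_*(Y,\sF)$ is a finitely generated abelian group. First I would identify the Mellin transformation geometrically. Since $f$ is proper, so is its base change $\widetilde{f}: \widetilde{X}\to \widetilde{Y}$, and proper base change along $p:\widetilde{Y}\to Y$ gives
\[
p^*Rf_*\Z_X \;\cong\; R\widetilde{f}_*\Z_{\widetilde{X}} \;\cong\; R\widetilde{f}_!\Z_{\widetilde{X}}.
\]
Composing with $R\widetilde{q}_!$, where $\widetilde{q}: \widetilde{Y}\to \mathrm{pt}$, and unwinding Definition \ref{def6} yields
\[
\sM_*(Y,Rf_*\Z_X) \;\cong\; R\widetilde{q}_!R\widetilde{f}_!\Z_{\widetilde{X}} \;\cong\; R\Gamma_c(\widetilde{X},\Z).
\]

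Next, I would check the finiteness. Since $\widetilde{X}$ is a complex manifold of real dimension $2\dim X$ that is homotopy equivalent to a finite CW-complex, Poincar\'e--Lefschetz duality identifies $H^k_c(\widetilde{X},\Z)$ with $H_{2\dim X - k}(\widetilde{X},\Z)$, and the latter is finitely generated by hypothesis. Hence every $\H^k(\sM_*(Y,\sF))$ is a finitely generated abelian group, and Corollary \ref{cor_integral} applies to conclude that $\sF = Rf_*\Z_X$ is locally constant, completing the argument.

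The present theorem is therefore a clean formal consequence of proper base change and Poincar\'e--Lefschetz duality, once Corollary \ref{cor_integral} is available. The real obstacle lies upstream, in the non-vanishing Proposition \ref{prop_nonzero} that feeds into Corollary \ref{cor_integral}: its proof uses characteristic cycles, the Bhatt--Schnell--Scholze generic vanishing theorem, and an induction on $\dim A$ via a real splitting of an extension of abelian varieties. With those tools in hand, the theorem itself requires no further geometric input beyond properness of $f$.
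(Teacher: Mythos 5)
Your argument is correct and matches the paper's proof in all essentials: the paper also reduces to Corollary~\ref{cor_integral}, after invoking (as in the proof of Theorem~\ref{th23}) proper base change to identify $\sM_*(Y,Rf_*\Z_X)$ with $R\Gamma_c(\widetilde{X},\Z)$ and Poincar\'e duality on the oriented non-compact manifold $\widetilde{X}$ to get finite generation. Your observation that the real content lives in Proposition~\ref{prop_nonzero} feeding Corollary~\ref{cor_integral} is also exactly the structure of the paper.
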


\begin{proof} 
It follows by our assumptions, as in the proof of Theorem \ref{th23}, that  the Mellin transformation $\sM_!(Rf_*\Z_X)$  has finitely generated cohomology groups. By Corollary \ref{cor_integral}, we get that 
 $Rf_*\Z_X$ is locally constant. By definition, this is equivalent to the fact that the map $X\to Y$ is a $\Z$-homology fiber bundle. 
\end{proof}
\begin{remark}\rm
The same proof also works for a compact complex torus. Notice that for a non-simple abelian variety, we never used the fact that, up to an isogeny, it is the product of two smaller abelian varieties. We only used the fact that it is the extension of two smaller abelian varieties, which also holds in the category of compact complex tori. 
\end{remark}

\begin{remark}\rm
In \cite{GL}, Gabber and Loeser proved that on a complex affine torus the Mellin transformation of any nontrivial constructible complex is nonzero. In fact, using inductive arguments as in \cite[Theorem 4.3]{LMWd}, we can extend Proposition \ref{prop_nonzero} to semiabelian varieties. This result can be used to give a positive answer to the integral Bobadilla-Koll\'ar question when $Y$ is a semi-abelian variety. 
\end{remark}

Applying Theorem \ref{th38} to the Albanese map of a complex projective manifold, we have the following. 
\begin{cor}
Let $X$ be a projective manifold. Let $X^{ab}$ be the universal free abelian cover of $X$, that is, the covering space of $X$ associated to the group homomorphism $\pi_1(X)\to H_1(X, \Z)/\mathrm{torsion}$. If $X^{ab}$ is homotopy equivalent to a finite CW-complex, then the Albanese map of $X$ is a $\Z$-homology fiber bundle. 
\end{cor}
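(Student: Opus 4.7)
The plan is to apply Theorem \ref{th38} directly, taking $Y = \mathrm{Alb}(X)$ and $f$ equal to the Albanese morphism $\alpha_X \colon X \to \mathrm{Alb}(X)$. Since $X$ is smooth projective and the Albanese variety is an abelian variety, $\alpha_X$ is indeed a morphism from a smooth projective variety to an abelian variety, as required. The only substantive point to verify is that the fiber product $X \times_{\mathrm{Alb}(X)} \widetilde{\mathrm{Alb}(X)}$ (the object called $\widetilde{X}$ in Theorem \ref{th38}) is homotopy equivalent to the free abelian cover $X^{ab}$ of the corollary's hypothesis.

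To see this, recall that the fundamental group of the Albanese variety is canonically identified with $H_1(\mathrm{Alb}(X), \Z) \cong H_1(X, \Z)/\mathrm{torsion}$, and under this identification the map on fundamental groups induced by $\alpha_X$,
\[
(\alpha_X)_{*} \colon \pi_1(X) \longrightarrow \pi_1(\mathrm{Alb}(X)),
\]
is precisely the natural quotient $\pi_1(X) \to H_1(X,\Z)/\mathrm{torsion}$ (abelianization followed by killing torsion). This is a standard consequence of the universal property of the Albanese variety. By the elementary theory of covering spaces, the pullback of the universal cover $\widetilde{\mathrm{Alb}(X)} \to \mathrm{Alb}(X)$ along $\alpha_X$ is the covering space of $X$ associated to $\ker (\alpha_X)_{*}$, which is exactly the subgroup defining $X^{ab}$. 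Hence
\[
X \times_{\mathrm{Alb}(X)} \widetilde{\mathrm{Alb}(X)} \;\simeq\; X^{ab}
\]
as covering spaces of $X$.

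Combining this identification with the hypothesis that $X^{ab}$ is homotopy equivalent to a finite CW-complex, all assumptions of Theorem \ref{th38} are satisfied for the map $\alpha_X \colon X \to \mathrm{Alb}(X)$. The conclusion of that theorem then yields that $\alpha_X$ is a $\Z$-homology fiber bundle, which is precisely the claim of the corollary. There is no real obstacle here once Theorem \ref{th38} is in hand: the argument is merely an unpacking of the definition of $X^{ab}$ and a direct invocation of the main theorem.
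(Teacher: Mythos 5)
Your proof is correct and takes the same approach as the paper, which simply states that the corollary follows by applying Theorem~\ref{th38} to the Albanese map. You have usefully spelled out the one thing that needs checking, namely that $X \times_{\mathrm{Alb}(X)} \widetilde{\mathrm{Alb}(X)}$ coincides with $X^{ab}$ because $(\alpha_X)_* \colon \pi_1(X) \to \pi_1(\mathrm{Alb}(X)) \cong H_1(X,\Z)/\mathrm{torsion}$ is the natural surjection, so the pullback of the universal cover is the connected cover associated to its kernel.
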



\section{Aspherical projective manifolds and the Singer-Hopf conjecture}\label{conj}
This paper is motivated in part by the following long-standing conjecture (e.g., see  \cite[Conjecture 25.1]{G}):

\begin{conj}{\rm (Singer-Hopf)}\label{SH}
Suppose $X^{2n}$ is a closed, aspherical manifold of real dimension $2n$. Then $$(-1)^n  \chi(X^{2n}) \geq 0.$$
\end{conj}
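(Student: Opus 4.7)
The plan is to attack the conjecture first in the complex projective setting, where the framework of this paper applies, and then to discuss the passage to the general case. For an aspherical closed complex projective manifold $X$ of complex dimension $n$, the chain I would aim to establish is (a) the universal cover $\widetilde X$ is Stein, and (b) the cotangent bundle $\Omega_X^1$ is nef. Once these two are in place, Proposition \ref{prop_nef} gives $(-1)^n\chi(X)\geq 0$ immediately, so the task reduces to carrying out (a) and (b). Unpacking the proposition, the mechanism is Kashiwara's index theorem applied to the perverse sheaf $\Q_X[n]$ combined with the effectivity of the characteristic cycle and the nef-intersection inequality Theorem \ref{thm_nef} for the rank-$n$ bundle $\Omega_X^1$.

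For step (a), the natural input is the paper's earlier theorem asserting that under the Shafarevich conjecture the universal cover of an aspherical projective manifold is Stein; one would either assume Shafarevich or verify Steinness directly in the cases where it is already known, for instance when $\pi_1(X)$ is linear (Eyssidieux), when $X$ is a ball quotient, or more generally when $X$ is locally symmetric of non-compact type. For step (b), which is precisely Conjecture \ref{conj1} of the paper, the approach is to exploit a smooth strictly plurisubharmonic exhaustion of $\widetilde X$ together with the deck transformation action to produce a (possibly singular) Finsler-type hermitian metric on $\Omega_X^1$ with semi-positive curvature, or equivalently to exhibit enough equivariant holomorphic $1$-forms on $\widetilde X$ to show that $\mathcal{O}_{\mathbf{P}(\Omega_X^1)}(1)$ is nef.

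The main obstacle is step (b). Steinness yields an abundance of holomorphic $1$-forms on $\widetilde X$, but passing from this pointwise richness to a global nefness statement on the compact quotient $X$ is delicate: one needs not merely separation of covectors but a curvature estimate that survives averaging under the infinite group $\pi_1(X)$. A candidate strategy is to define a pseudo-metric on $\Omega_{\widetilde X}^1$ as the supremum of $|\partial u|$ over suitably normalized bounded plurisubharmonic functions $u$ on $\widetilde X$, analyze its curvature in a manner reminiscent of the Kobayashi--Royden pseudo-metric, and show that the induced object on $X$ is semi-positive in the required sense. Genuinely new geometric input appears necessary at this step, since a direct construction of a Hermite--Einstein metric on $\Omega_X^1$ is blocked by the absence of a known polarization of $\Omega_X^1$ itself.

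Finally, the general (smooth, non-projective) Singer--Hopf case lies outside the reach of the holomorphic and perverse sheaf machinery developed here. A natural reformulation proceeds via Atiyah's $L^2$-index theorem and the Cheeger--Gromov identity $\chi(X)=\sum_i(-1)^i b_i^{(2)}(\widetilde X)$, reducing the conjecture to the Singer conjecture that $b_i^{(2)}(\widetilde X)=0$ for $i\neq n$. Any concrete plan for that reduction would amount to restating a major open problem, so I would concentrate the effort on steps (a) and (b) above and treat the projective case as the primary deliverable, with the understanding that the full Riemannian conjecture remains beyond these methods.
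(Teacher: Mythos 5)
Your proposal reproduces the paper's own conditional strategy for the projective case essentially verbatim: reduce to showing the universal cover is Stein (the paper's Conjecture~\ref{conj2}, implied by Shafarevich via Corollary~\ref{cor_twoconj}), then that a Stein universal cover forces $\Omega_X^1$ to be nef (the paper's Conjecture~\ref{conj1}), and finally invoke Proposition~\ref{prop_nef}, whose engine is indeed Kashiwara's index theorem plus effectivity of characteristic cycles plus the Demailly--Peternell--Schneider inequality. You have also correctly located the genuine obstruction at step (b), exactly as the paper does by leaving Conjecture~\ref{conj1} open, and you are right that the general Riemannian case (via $L^2$-Betti numbers and the Singer conjecture) lies outside this holomorphic framework. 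To be clear, though: the statement you were asked about is a \emph{conjecture} in the paper, and the paper does not prove it; neither does your proposal, and no reviewer should expect it to. What you have written is a faithful restatement of the paper's proposed reduction, not a proof, and your candidate strategy for Conjecture~\ref{conj1} (a Kobayashi--Royden--style pseudo-metric on $\Omega^1_{\widetilde X}$ built from bounded plurisubharmonic functions) is speculative and, as you acknowledge, would require new geometric input to yield a genuine curvature semi-positivity statement that descends to the compact quotient.
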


The conjecture is true for $n=1$ (i.e., real dimension $2$) since the only closed surfaces with positive Euler characteristic are $S^2$ and $\R P^2$, and they are the only non-aspherical ones. In the special case when $X^{2n}$ is a Riemannian manifold with non-positive sectional curvature, this conjecture is attributed to Hopf and Chern. (The fact that a Riemannian manifold with non-positive sectional curvature is aspherical is a consequence of Hadamard's Theorem.) It was strengthened to the aspherical case by Singer, and it also asserts the vanishing of all $L^2$-Betti numbers of the universal cover, except possibly the middle one. 

Jost and Zuo \cite{JZ} proved Conjecture \ref{SH} for $X$ a compact K\"ahler manifold with non-positive sectional curvature. Their techniques rely on analytic arguments introduced by Gromov \cite{Gro}, who confirmed the Singer-Hopf Conjecture for K\"ahler hyperbolic manifolds (these include K\"ahler manifolds with negative and pinched sectional curvature).

We propose the following natural generalization of Conjecture \ref{SH} in the projective context:
\begin{conj} \label{co2} If $X$ is an aspherical  projective manifold and $\mathcal{P}$ is a perverse sheaf on $X$, then the Euler characteristic of $\mathcal{P}$  is non-negative, that is, $\chi(X,\mathcal{P}) \geq 0$.
\end{conj}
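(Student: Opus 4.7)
The plan is to reduce Conjecture~\ref{co2} to Conjecture~\ref{conj1} together with the Shafarevich conjecture, following precisely the roadmap sketched in Section~\ref{conj} of this paper.

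First, I would invoke Kashiwara's global index theorem to write
\[
\chi(X, \mathcal{P}) = \langle CC(\mathcal{P}), [X] \rangle_{T^*X}.
\]
Since $\mathcal{P}$ is perverse, its characteristic cycle
$CC(\mathcal{P}) = \sum_Z n_Z\,[T^*_Z X]$
is an \emph{effective} $\Z$-linear combination of conormal cycles of irreducible closed subvarieties $Z \subseteq X$ (that is, $n_Z \geq 0$). Hence it suffices to show that each intersection number $\langle [T^*_Z X], [X] \rangle_{T^*X}$ is non-negative. By Theorem~\ref{thm_nef} of Demailly--Peternell--Schneider, applied to the rank $\dim X$ vector bundle $E = \Omega^1_X$, this is automatic as soon as the cotangent bundle of $X$ is nef. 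In other words, Conjecture~\ref{co2} reduces instantly to establishing nefness of $\Omega^1_X$ for any aspherical projective manifold $X$; this reduction is essentially the content of Proposition~\ref{prop_nef}.

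Second, to establish the nefness of $\Omega^1_X$, I would invoke the two-step conjectural chain flagged in the introduction. On the one hand, the Shafarevich conjecture implies that the universal cover $\widetilde{X}$ of any aspherical projective manifold is Stein (this is the theorem stated immediately after Theorem~\ref{ti}, and proved in Section~\ref{conj} as Corollary~\ref{cor_twoconj}). On the other hand, Conjecture~\ref{conj1} asserts that Steinness of $\widetilde{X}$ forces $\Omega^1_X$ to be nef. Chaining these two implications with the Kashiwara/DPS reduction above closes the argument.

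The principal obstacle is, of course, that both links in this chain are themselves longstanding or new open problems. The Shafarevich conjecture is known only in restricted settings (certain K\"ahler groups, linear fundamental groups, etc.), and Conjecture~\ref{conj1} is a fresh assertion whose unconditional verification would presumably require genuinely new positivity arguments relating the complex analytic geometry of the universal cover to curvature of the base---perhaps via $L^2$-constructions of nef quotients of $\Omega^1_X$ in the spirit of Gromov's work on K\"ahler hyperbolic manifolds, or by extracting semi-positive metrics from plurisubharmonic exhaustions of $\widetilde{X}$. A more modest but unconditional goal would be to identify classes of aspherical projective manifolds for which nefness of $\Omega^1_X$ is independently known---for instance smooth subvarieties of abelian varieties as mentioned in the example after Corollary~\ref{cor_subvariety}, or projective manifolds of non-positive holomorphic bisectional curvature---in which case Conjecture~\ref{co2} follows directly from Proposition~\ref{prop_nef} without any appeal to Shafarevich.
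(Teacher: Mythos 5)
Your proposal follows exactly the paper's own conditional reduction: Kashiwara's index theorem plus the Demailly--Peternell--Schneider semi-positivity result (Theorem~\ref{thm_nef}) give Proposition~\ref{prop_nef}, which reduces Conjecture~\ref{co2} to nefness of $\Omega^1_X$, and this in turn is supplied by Conjecture~\ref{conj1} together with Conjecture~\ref{conj2}, the latter being a consequence of the Shafarevich conjecture via Corollary~\ref{cor_twoconj}. You also correctly flag that this is a reduction between open conjectures rather than a proof, which matches the paper's intent.
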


By Proposition \ref{prop_nef}, Conjecture \ref{co2} is a consequence of the following two conjectures.

\begin{conj}\label{conj1}
Let $Y$ be a projective manifold. If the universal cover of $Y$ is a Stein manifold, then the cotangent bundle of $Y$ is nef. 
\end{conj}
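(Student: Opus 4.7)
The plan is to verify nefness of $\Omega^1_Y$ via the curve test: $\Omega^1_Y$ is nef if and only if for every morphism $\phi : C \to Y$ from a smooth projective curve $C$, every quotient line bundle of $\phi^*\Omega^1_Y$ has non-negative degree; equivalently, every line sub-bundle $N \hookrightarrow \phi^*T_Y$ satisfies $\deg N \leq 0$. I would argue by contradiction: assume some $\phi : C \to Y$ admits a line sub-bundle $N \hookrightarrow \phi^*T_Y$ with $\deg N > 0$, and aim to derive a contradiction with the Stein property of $\widetilde{Y}$.

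The natural input from Steinness is a smooth strictly plurisubharmonic exhaustion $\rho : \widetilde{Y} \to \R$, giving a K\"ahler form $\omega = dd^c \rho$ on $\widetilde{Y}$ together with the global generation of $T_{\widetilde{Y}}$ by holomorphic sections (Cartan's Theorem A). Form the analytic fibre product $\widetilde{C} = C \times_Y \widetilde{Y}$ with projections $q : \widetilde{C} \to C$ and $\widetilde{\phi} : \widetilde{C} \to \widetilde{Y}$; here $\widetilde{C}$ is a (typically non-compact) open Riemann surface. The positive-degree sub-bundle pulls back to $q^*N \hookrightarrow \widetilde{\phi}^*T_{\widetilde{Y}}$, and Theorem A yields a surjection $\OC_{\widetilde{Y}}^{\oplus m} \twoheadrightarrow T_{\widetilde{Y}}$ whose pullback produces global generators of $\widetilde{\phi}^*T_{\widetilde{Y}}$, and hence of the quotient $\widetilde{\phi}^*T_{\widetilde{Y}}/q^*N$, on $\widetilde{C}$. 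The strategy would be to combine this holomorphic-section generation on $\widetilde{C}$ with the compactness of $C$ and with $\deg N > 0$ to force a contradiction, using $L^2$ estimates for the Hermitian metric induced by $\omega$ with weights controlled by $\rho$.

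I expect the main obstacle to be the gap between the $\pi_1(Y)$-invariant data on $Y$ (the bundle $T_Y$, the compact base, the integer $\deg N$) and the non-invariant Stein data on $\widetilde{Y}$ (the exhaustion $\rho$, the metric $\omega$, and the sections from Cartan's Theorem A). One cannot simply descend $\rho$ to $Y$, since a non-constant plurisubharmonic function cannot exist on a compact complex manifold, and no straightforward $\pi_1$-equivariant averaging of strictly plurisubharmonic exhaustions is available. This is precisely why the statement remains a conjecture, and any genuine proof will presumably have to bridge this gap through $L^2$-cohomology and curvature estimates in the spirit of Gromov's K\"ahler hyperbolic framework, possibly replacing $\omega$ by a $\pi_1$-equivariant current built from $\rho$ via regularized maxima.

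A complementary geometric route is suggested by the Shafarevich philosophy alluded to in the paper: a Stein manifold contains no compact analytic subvariety of positive dimension, so any morphism from a compact analytic variety into $\widetilde{Y}$ has zero-dimensional image. Heuristically, a positive sub-bundle $N \hookrightarrow \phi^*T_Y$ generates a positive-dimensional family of deformations of $\phi$, and in favorable situations (e.g.\ if $C \cong \mathbb{P}^1$) it forces the presence of rational curves in $Y$, which would lift to $\widetilde{Y}$ and violate Steinness. The hardest step, which is the genuine difficulty of the conjecture, is converting such rigidity on \emph{compact} lifts into the numerical degree estimate $\deg N \leq 0$ for the generic curve $C$ of higher genus, where the lift $\widetilde{C}$ is a non-compact open Riemann surface and no direct compactness obstruction applies.
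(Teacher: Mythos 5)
You have correctly identified that this statement is a \emph{conjecture}, not a theorem: the paper offers no proof of Conjecture~\ref{conj1}, only a pointer to a weaker result of Kratz (\cite{Kr}) asserting nefness of $\Omega^1_Y$ when the universal cover is a bounded domain in a Stein manifold. So there is nothing in the paper against which to check your argument, and indeed you make no claim to have closed the gap.

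Your diagnosis of the obstruction is accurate and matches the difficulty implicit in the paper's framing: Steinness supplies a strictly plurisubharmonic exhaustion and Cartan's Theorem~A on $\widetilde Y$, but none of this data is $\pi_1(Y)$-equivariant, whereas nefness of $\Omega^1_Y$ is a numerical statement about curves in the compact quotient $Y$; averaging or descending the Stein structure is precisely what cannot be done, since a compact complex manifold carries no nonconstant plurisubharmonic function. Your secondary route via absence of positive-dimensional compact subvarieties in $\widetilde Y$ is also aligned with the paper's perspective (compare the proposition in Section~\ref{conj} showing $\widetilde Y$ contains no positive-dimensional compact analytic subvariety when $Y$ is aspherical), but, as you observe, ruling out rational curves is far from the full nefness statement. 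One small inaccuracy worth flagging: Kratz's hypothesis is that $\widetilde Y$ is a \emph{bounded} domain in a Stein manifold, which gives a globally bounded plurisubharmonic function with $\pi_1$-invariant Levi form, exactly the piece of equivariant curvature control that is missing in the general Stein case; reading \cite{Kr} would sharpen your sense of what extra input a proof would need.
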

A weaker version of this conjecture is proved in \cite{Kr}, namely, if the universal cover of $Y$ is a bounded domain in a Stein manifold, then the cotangent bundle of $Y$ is nef. 

\begin{conj}\label{conj2}
If $Y$ is an aspherical projective manifold, then the universal cover is Stein. 
\end{conj}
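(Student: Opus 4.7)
The natural plan is to deduce Conjecture~\ref{conj2} from the Shafarevich conjecture, which predicts that the universal cover of any smooth projective variety is holomorphically convex. Indeed, the conditional statement ``Shafarevich implies Steinness of $\widetilde{Y}$'' for aspherical $Y$ is precisely the result already announced in the introduction, so an unconditional proof of Conjecture~\ref{conj2} essentially reduces to establishing Shafarevich in the aspherical case.

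Granting holomorphic convexity of $\widetilde{Y}$, I would invoke the Remmert reduction: there is a proper surjective holomorphic map $\pi : \widetilde{Y} \to S$ onto a Stein complex space, with $\pi_*\mathcal{O}_{\widetilde{Y}} = \mathcal{O}_S$ and with connected fibers coinciding with the maximal compact connected analytic subsets of $\widetilde{Y}$. It then suffices to show every fiber is a point, for then $\pi$ is a proper finite bijection, hence a biholomorphism, and $\widetilde{Y} \cong S$ is Stein. This ``no positive-dimensional compact analytic subvariety'' step is where asphericity enters. Pick any K\"ahler form $\omega$ on $Y$ (available by projectivity) and set $\tilde\omega = p^*\omega$, a K\"ahler form on $\widetilde{Y}$. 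For any irreducible compact analytic subvariety $Z \subset \widetilde{Y}$ of pure complex dimension $k \geq 1$, Wirtinger's theorem gives $\int_Z \tilde\omega^k = k!\cdot\mathrm{vol}(Z) > 0$, so the fundamental class $[Z] \in H_{2k}(\widetilde{Y};\mathbb{Z})$ pairs nontrivially with $[\tilde\omega^k]$. But asphericity of $Y$ means $\widetilde{Y}$ is contractible, forcing $H_{2k}(\widetilde{Y}) = 0$ for $k \geq 1$---a contradiction. (If one prefers to avoid the fundamental class of a singular variety, one may instead pass to a resolution $\sigma: \tilde Z \to Z$ and observe that $\tilde\omega$ is exact on the contractible space $\widetilde Y$, so Stokes' theorem applied on $\tilde Z$ gives $\int_Z \tilde\omega^k = \int_{\tilde Z}\sigma^*\tilde\omega^k = 0$, still contradicting Wirtinger positivity.) Hence all fibers of $\pi$ are points and $\widetilde{Y}$ is Stein.

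The main obstacle is clearly the Shafarevich conjecture itself, which remains widely open despite numerous partial results (e.g., for linear fundamental groups, or for various classes built from nonpositive curvature). The Remmert reduction plus K\"ahler-positivity argument above is unconditional and uses asphericity only through contractibility of $\widetilde{Y}$; it supplies no mechanism for producing holomorphic convexity from scratch. Any unconditional progress on Conjecture~\ref{conj2} would therefore presumably come from a proof of Shafarevich tailored to the aspherical case, likely exploiting rigidity features of fundamental groups of aspherical manifolds via $L^2$-methods, harmonic maps to Bruhat-Tits buildings, or nonabelian Hodge theory on the universal cover.
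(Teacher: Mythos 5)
You correctly recognize that Conjecture~\ref{conj2} is an open conjecture and that the paper only proves the conditional implication from Shafarevich, namely Corollary~\ref{cor_twoconj}. Your outer skeleton—invoke holomorphic convexity of $\widetilde Y$, apply the Cartan--Remmert reduction $\pi:\widetilde Y\to S$ to a Stein space, and then show $\widetilde Y$ contains no positive-dimensional compact analytic subvariety so that $\pi$ is a biholomorphism—is exactly the paper's. Where you diverge is the no-compact-subvariety step. The paper first cuts a candidate $Z\subset\widetilde Y$ down to an irreducible curve by general hyperplane preimages, normalizes, composes with $\widetilde Y\to Y$, and observes that the resulting positive $1$-cycle in the projective manifold $Y$ is homologically nontrivial (hence the map is not null-homotopic) yet factors through the contractible $\widetilde Y$. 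You instead work directly with $Z$ of arbitrary dimension $k$ on $\widetilde Y$, pull back a K\"ahler form, and derive a contradiction from Wirtinger positivity. Both are correct; yours avoids the reduction to curves, while the paper's avoids integration over a possibly-singular $Z$ and works with homology of the \emph{compact} $Y$ rather than the noncompact $\widetilde Y$. One precision: your primary argument pairs $[Z]\in H_{2k}(\widetilde Y)$ with $[\tilde\omega^k]\in H^{2k}(\widetilde Y)$, but both groups vanish by contractibility, so the pairing says nothing directly; your parenthetical Stokes argument (write $\tilde\omega=d\alpha$ on $\widetilde Y$, pass to a resolution $\sigma:\tilde Z\to Z$, and get $\int_{\tilde Z}\sigma^*\tilde\omega^k=0$, contradicting Wirtinger) is the rigorous version and should be the main line.
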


\begin{remark}\rm
Corollary \ref{cor_subvariety} suggests that one may formulate a generalization of the Singer-Hopf conjecture to singular varieties. For example, one may conjecture that if the universal cover of a (possibly singular) complex projective variety $X$ is a Stein space, then $(-1)^{\dim X}\chi_{\mathrm{IH}}(X)\geq 0$. 
\end{remark}

We show below that Conjecture \ref{conj2} follows from the following Shafarevich conjecture (see \cite{E} for an introduction) on the universal cover of projective manifolds.
\begin{conj}{\rm (Shafarevich conjecture)}
The universal cover of any projective manifold is holomorphically convex. 
\end{conj}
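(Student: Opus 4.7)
The assertion is that for every projective manifold $Y$, the universal cover $\widetilde Y$ admits a proper holomorphic map (the Remmert reduction) onto a Stein complex space, equivalently that the holomorphic convex hull of any compact set in $\widetilde Y$ is compact. This is the famous Shafarevich conjecture, which is open in general, so a realistic proposal must be an outline of the representation-theoretic and analytic strategy that has had the most success, together with an honest identification of where it breaks down.

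First I would pass to the Shafarevich base. By results of Campana and Koll\'ar there exists a dominant proper algebraic morphism $\mathrm{sh}_Y : Y \to \mathrm{Sh}(Y)$, characterized by the property that an irreducible subvariety $Z \subset Y$ is contracted by $\mathrm{sh}_Y$ if and only if the image of $\pi_1(Z) \to \pi_1(Y)$ is finite. Lifting to universal covers yields a proper holomorphic map $\widetilde Y \to \widetilde{\mathrm{Sh}(Y)}$, so it suffices to prove that the target is Stein; holomorphic convexity of $\widetilde Y$ then follows by composition.

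Next I would attempt to construct a strictly plurisubharmonic exhaustion of $\widetilde{\mathrm{Sh}(Y)}$ using non-abelian Hodge theory. The idea is to exploit every available reductive representation $\rho : \pi_1(Y) \to G(\mathbb{C})$ into a complex reductive group, as well as reductive representations into groups over non-Archimedean local fields, and to produce equivariant pluriharmonic maps from $\widetilde Y$ to the corresponding symmetric spaces or Bruhat--Tits buildings via the techniques of Siu, Corlette, Jost--Zuo, Gromov--Schoen and Eyssidieux. The energy densities and distance-squared functions pulled back from such maps are plurisubharmonic, and after descent along $\widetilde{\mathrm{sh}}_Y$ they yield plurisubharmonic functions on $\widetilde{\mathrm{Sh}(Y)}$; a Grauert/Narasimhan-type criterion would then upgrade a sufficiently rich family of these to the Stein property. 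This program has been implemented by Katzarkov, Eyssidieux, Ramachandran, Napier--Ramachandran and Eyssidieux--Katzarkov--Pantev--Ramachandran, settling the conjecture when $\pi_1(Y)$ is linear, or more generally when it admits sufficiently many reductive representations separating the relevant compact analytic subsets.

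The main obstacle is precisely this representation-theoretic input. For a projective fundamental group admitting no faithful linear or buildings-valued quotient that separates the compact analytic subsets of $\widetilde Y$, the harmonic-map machinery produces too few plurisubharmonic functions to exhaust $\widetilde{\mathrm{Sh}(Y)}$, and the very existence of such pathological projective fundamental groups is itself a deep open question intertwined with residual finiteness problems for K\"ahler groups. Overcoming this obstruction appears to require either a structural theorem for projective fundamental groups far beyond what is currently known, or a genuinely new non-representation-theoretic analytic input producing plurisubharmonic exhaustions directly from the projective structure of $Y$; barring such an advance, a full proof of the statement as worded seems out of reach.
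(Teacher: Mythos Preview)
The statement you are asked to prove is \emph{not} proved in the paper at all: it is stated there as the Shafarevich conjecture, explicitly labeled a \texttt{conj} environment, and left open. The paper only \emph{uses} it as a hypothesis, showing in the subsequent Proposition and Corollary that it would imply the universal cover of an aspherical projective manifold is Stein. So there is no ``paper's own proof'' to compare against.

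Your proposal correctly recognizes that the conjecture is open in general and gives an accurate sketch of the representation-theoretic/non-abelian Hodge-theoretic strategy that has produced the best partial results (Katzarkov, Eyssidieux, Eyssidieux--Katzarkov--Pantev--Ramachandran for linear fundamental groups), together with an honest identification of the obstacle. That is the right thing to say about this statement; just be aware that you were never meant to produce a proof, because none exists in the paper and none is known in full generality.
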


\begin{prop}
Suppose $Y$ is an aspherical compact projective manifold. Then its universal cover $\widetilde{Y}$ does not contain any positive dimensional compact analytic subvariety. 
\end{prop}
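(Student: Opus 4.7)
The plan is to derive a contradiction by comparing two computations of $\int_Z (p^*\omega)^k$, where $Z \subset \widetilde{Y}$ is a hypothetical positive-dimensional compact analytic subvariety of dimension $k$, $\omega$ is a K\"ahler form on $Y$, and $p : \widetilde{Y} \to Y$ is the universal covering map. The first computation will use positivity and show this integral is strictly positive; the second will use asphericity of $Y$ to show that it must vanish.

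More precisely, first I would fix a K\"ahler form $\omega$ on $Y$ (which exists since $Y$ is projective, by pulling back Fubini--Study via a projective embedding). Since $p$ is a local biholomorphism, $p^*\omega$ is a K\"ahler form on $\widetilde{Y}$. For any irreducible positive-dimensional compact analytic subvariety $Z \subset \widetilde{Y}$ of pure dimension $k$, Wirtinger's theorem gives
\[
\int_Z (p^*\omega)^k = k! \cdot \mathrm{vol}(Z) > 0,
\]
where the integral is interpreted via integration over the smooth locus $Z_{\mathrm{reg}}$, which is known to converge because $Z$ is a compact analytic set.

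Next, I would exploit asphericity. Since $Y$ is aspherical, $\widetilde{Y}$ is contractible, so $H^{2k}(\widetilde{Y}, \mathbb{R}) = 0$ for every $k > 0$. Therefore the closed form $(p^*\omega)^k$ is exact on $\widetilde{Y}$, say $(p^*\omega)^k = d\eta$ for some smooth $(2k-1)$-form $\eta$. The key point is now Stokes' theorem for analytic sets: by Lelong's theorem, the current of integration $[Z]$ associated to the compact analytic subvariety $Z$ is $d$-closed on $\widetilde{Y}$, so
\[
\int_Z (p^*\omega)^k = \langle [Z], d\eta \rangle = \langle d[Z], \eta \rangle = 0.
\]
Combined with the previous strict inequality, this is a contradiction, and hence no such $Z$ can exist. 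For a reducible $Z$, one applies the same argument to any irreducible component of maximal dimension.

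The main subtlety is the use of Stokes' theorem on singular analytic subvarieties; this is a classical result of Lelong (or, equivalently, the $d$-closedness of the current of integration on a compact analytic cycle) and requires no hypothesis on the singularities of $Z$. Everything else is essentially formal once the K\"ahler form is in hand and asphericity is converted into the topological statement $H^{>0}(\widetilde{Y}, \mathbb{R}) = 0$. I do not expect any genuine difficulty beyond recording these standard ingredients carefully.
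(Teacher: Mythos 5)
Your proof is correct, but it takes a genuinely different route from the paper's. The paper first reduces to the case of an irreducible curve $Z\subset\widetilde Y$ by slicing with preimages of general hyperplane sections, normalizes to get a compact Riemann surface $Z'$, and then argues purely homotopy-theoretically: the composition $Z'\to Z\hookrightarrow\widetilde Y\to Y$ has image a positive $1$-cycle in the projective manifold $Y$, hence is nontrivial on $H_2$ and so not null-homotopic, yet it factors through the contractible space $\widetilde Y$ and must be null-homotopic. Your argument keeps the whole subvariety on $\widetilde Y$ and works directly with the K\"ahler form: Wirtinger/Lelong gives $\int_Z(p^*\omega)^k>0$, while contractibility forces $(p^*\omega)^k$ to be exact, so $d$-closedness of the current $[Z]$ (Lelong's theorem) gives $\int_Z(p^*\omega)^k=0$. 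Both proofs rest on the same two inputs --- the K\"ahler structure on $Y$ and the contractibility of $\widetilde Y$ --- but the paper converts contractibility into a null-homotopy statement and tests against homology of $Y$, whereas you convert it into an exactness statement in de Rham cohomology of $\widetilde Y$ and test against the current of integration. The paper's route is more elementary (no currents, only the easy direction of the projection formula in homology) at the cost of the reduction-to-curves step and normalization; yours skips that reduction entirely but invokes Lelong's theorem on closedness of integration currents over singular analytic sets. One tiny stylistic remark: you restrict to an irreducible component of maximal dimension when $Z$ is reducible, but any positive-dimensional irreducible component would do just as well.
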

\begin{proof}
Suppose that $\widetilde{Y}$ does contain a positive dimensional compact analytic subvariety $Z$. By taking intersections with the preimage of general hyperplane sections and taking irreducible components, we can assume that $Z$ is one-dimensional and irreducible. Let $Z'$ be the normalization of $Z$. Then $Z'$ is a compact Riemann surface. Consider the composition
\[
Z'\to Z\hookrightarrow \widetilde{Y}\to Y,
\]
where the first map is the normalization map, the second is the inclusion map and the third is the universal covering map. The composition is a nonconstant holomorphic map, whose image is an irreducible 1-cycle in $Y$. In a projective manifold, any positive cycle corresponds to a nonzero element in the homology groups. So the above composition is not null-homotopic. However, since it factors through a contractible space $\widetilde{Y}$, it must be null-homotopic, a contradiction. 
\end{proof}
\begin{cor}\label{cor_twoconj}
The Shafarevich conjecture implies Conjecture \ref{conj2}.
\end{cor}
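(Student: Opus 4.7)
The plan is to bridge the Shafarevich conjecture and Conjecture \ref{conj2} via the classical Remmert reduction theorem. Under the Shafarevich conjecture, the universal cover $\widetilde{Y}$ is holomorphically convex. Recall that every holomorphically convex complex space $X$ admits a canonical Remmert reduction, i.e., a proper surjective holomorphic map $r\colon X \to X'$ with $r_*\sO_X = \sO_{X'}$ onto a Stein complex space $X'$, whose (connected) fibers are exactly the maximal connected compact analytic subvarieties of $X$.

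Applying this to $X = \widetilde{Y}$, the Proposition immediately preceding the corollary asserts that $\widetilde{Y}$ contains no positive-dimensional compact analytic subvariety. Hence each fiber of $r$ is a connected compact analytic subset of dimension zero, that is, a single point. Since $\widetilde{Y}$ is smooth and $r$ is proper with singleton fibers satisfying $r_*\sO_{\widetilde{Y}} = \sO_{X'}$, the map $r$ is a biholomorphism onto the Stein space $X'$. Thus $\widetilde{Y}$ is itself Stein, establishing Conjecture \ref{conj2}.

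There is essentially no serious obstacle: the genuine topological content has already been isolated in the preceding Proposition (whose proof uses the fact that a nontrivial cycle on a projective manifold is never null-homologous, so a compact analytic subvariety cannot lift to the contractible universal cover), and the remainder is a formal application of Remmert reduction. The only small point to verify is that a proper holomorphic map with singleton fibers whose direct image is the structure sheaf is a biholomorphism in the smooth setting, which is classical and requires only the normality of $X'$ inherited from the smoothness of $\widetilde{Y}$.
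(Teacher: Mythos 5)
Your proof is correct and follows essentially the same route as the paper: invoke the Shafarevich conjecture to get holomorphic convexity of $\widetilde{Y}$, apply the Cartan--Remmert reduction, and use the preceding Proposition (no positive-dimensional compact analytic subvarieties in $\widetilde{Y}$) to force the reduction map to be a biholomorphism. The only difference is that you spell out a bit more carefully why a proper map with singleton fibers and $r_*\mathcal{O}_{\widetilde{Y}} = \mathcal{O}_{X'}$ is a biholomorphism, which the paper leaves implicit.
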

\begin{proof}
Let $Y$ be an aspherical projective manifold with universal cover $\widetilde{Y}$. The Shafarevich conjecture implies that $\widetilde{Y}$ is holomorphically convex. By the Cartan-Remmert reduction, there exists a proper surjective holomorphic map $f: \widetilde{Y}\to Z$ to a Stein space with connected fibers such that $f_*(\sO_Y)\cong \sO_Z$. Since $\widetilde{Y}$ does not contain any  positive dimensional compact analytic subvariety, the map $f$ must be a bijection, and hence a biholomorphic map. 
\end{proof}

If the fundamental group of $Y$ admits a faithful finite-dimensional linear representation, 
the Shafarevich conjecture is proved in the recent breakthrough \cite{EKPR} by Eyssidieux-Katzarkov-Pantev-Ramachandran. We are not aware of any example of an aspherical projective manifold whose fundamental group does not admit a finite-dimensional faithful representation.  This leads us to the following question.
\begin{question}
Does the fundamental group of an aspherical projective manifold always admit a finite-dimensional faithful representation?
\end{question}


\end{document}